\newtheorem{theorem}{Theorem}[section]
\newtheorem*{thma}{Theorem A}
\newtheorem*{thmb}{Theorem B}
\newtheorem*{thmc}{Theorem C}
\newtheorem*{lema}{Lemma A}
\newtheorem*{lemab}{Lemma B}
\newtheorem{lemma}[theorem]{Lemma}
\newtheorem{corollary}[theorem]{Corollary}
\theoremstyle{definition}
\newtheorem{definition}[theorem]{Definition}
\newtheorem{example}[theorem]{Example}
\theoremstyle{remark}
\newtheorem{remark}[theorem]{Remark}
\begin{document}
\title{\Large {\bf Convolution properties of some harmonic mappings in the right-half plane}}
\author{Raj Kumar\,$^a$ \thanks{rajgarg2012@yahoo.co.in},\, {Michael Dorff\,$^b$\thanks{mdorff@math.byu.edu},\, Sushma Gupta\,$^a$, and Sukhjit Singh\,$^a$  }\\
\emph{ \small $^a$\,Sant Longowal Institute of Engineering and Technology, Longowal-148106 (Punjab), India.}\\
\emph{ \small$^b$\, Department of Mathematics, Brigham Young University, Provo, Utah, 84602, USA.}}
\date{}
\maketitle
\begin{abstract}
Dorff, proved in [\ref{do}] that the convolution of two harmonic right-half plane mappings is convex in the direction of real axis
 provided that the convolution is locally univalent and sense preserving. Later, it was shown in [\ref{do and no}] that the condition of
 locally univalent and sense preserving can be dropped in some special cases. In this paper, we generalize the main result from [\ref{do and no}].\end{abstract}

\vspace{1mm}
{\small
{\bf Key Words}
: Univalent harmonic mappings, mappings in right half-plane, convolution.

{\bf AMS Subject Classification:}  30C45.}

\section{Introduction}
 Let $f = u+iv$ be a continuous complex-valued harmonic mapping in the open unit disk $E = \{z: |z|<1\}$, where both $u$ and $v$ are
 real-valued harmonic functions in $E$. Such a mapping can be decomposed into two parts and can be expressed as  $f=h+\overline g$. Here $h$ is
 known as the analytic part and $g$ the co-analytic part of $f$. Lewy's Theorem implies that a harmonic mapping $f=h+\overline g$ defined in $E$, is
 locally univalent and sense preserving if and only if the Jacobian of the mapping, defined by $J_f=|h'|^2-|g'|^2,$ is positive or equivalently,
if and only if $h'(z)\not=0$ and the dilatation function $\omega$ of $f$, defined by $\displaystyle\omega(z)=\frac{g'(z)}{h'(z)}$, satisfies $|\omega(z)|<1$ in $E$. We denote by $S_H$  the class of all harmonic, sense-preserving  and univalent mappings $f=h+\overline g$, defined in $E$
 which are normalized by the conditions $ h(0)=0 $ and $h_{z}(0)=1$. Therefore, a function $f=h+\overline g$ in class $S_H$
 has the representation,
\begin{equation} f(z) = z+ \sum _{n=2}^{\infty} a_nz^n + \sum _{n=1}^{\infty}\overline{ b_nz^n} , \end{equation} for all $z$ in $E$. The class of functions of the type (1) with ${\overline b_1}=0$ is denoted by $S_H^0$ which is a subset of $S_H$.
Further let $K_H$(respectively $K_H^0$) be the subclass of  $S_H$(respectively $S_H^0$) consisting of functions which map the unit disk $E$ onto convex domains.
A domain $\Omega$ is said to be convex in the direction $\phi, 0 \leq \phi < \pi,$ if every line parallel to the line joining $0$ and $ e ^{i \phi}$ has a
connected intersection with $\Omega$. In particular, a domain convex in horizontal direction is denoted by CHD.
\begin{definition} Convolution or Hadamard product of two harmonic mappings $F(z) = H + \overline G=z + \sum_{n=2}^\infty A_n z^n  +
   \sum_{n=1}^\infty{\overline B}{_n} {\overline z} {^n}$ and
$ f(z)= h+ \overline g  =z+\sum_{n=2}^\infty a_n z^n  +   \sum_{n=1}^\infty\overline
   {b}{_n}\overline{z}{^n}$ in $S_H$ is defined as
$$ \begin{array}{clll}
(F {\ast} f)(z)&=&  (H {\ast} h)(z) +\overline{(G {\ast} g)(z)}\\
 &= & z+ \sum_{n=2}^\infty a_n A_n z^n + \sum_{n=1}^\infty\overline {{b}{_n}{ B}{_n}}\overline {z}{^n}.
\end{array} $$
\end{definition}
Let $f_a=h_a+\overline{g_a}\in K_H$ be the mapping in the right half-plane given by $\displaystyle h_a+g_a=\frac{z}{1-z}$\,\,with\,dilatation\,
 function   $\displaystyle \omega_a(z)=\frac{a-z}{1-az}\,\,\,(|a|<1,a\in \mathbb{R}).$ Then, by using the shearing technique (see [\ref{cl and sh}]),
 we get
\begin{equation}
\displaystyle h_a(z) =\frac{\frac{1}{1+a}z-\frac{1}{2}z^2}{(1-z)^2}\quad {\rm and}\quad\displaystyle g_a(z) =\frac{\frac{a}{1+a}z-\frac{1}{2}z^2}{(1-z)^2}.
\end{equation}
 By setting $a=0$, we get $f_0=h_0+\overline{g_0}\in K_H^0,$ the standard right half-plane mapping, where
\begin{equation}
\displaystyle h_0(z) =\frac{z-\frac{1}{2}z^2}{(1-z)^2}\quad {\rm and}\quad\displaystyle g_0(z) =\frac{-\frac{1}{2}z^2}{(1-z)^2}.
\end{equation}

Unlike the case of analytic functions, the convolution of two univalent convex harmonic functions is not necessarily convex harmonic. It may not even be univalent.
So, it is interesting to explore the convolution properties of mappings in the class $K_H$. In [\ref{do}] and [\ref{do and no}], the authors obtained several results in this direction. In particular they proved the following:
\begin{thma} (See [\ref{do}]) Let $\displaystyle f_1=h_1+\overline{g_1}\,,f_2=h_2+\overline{g_2}\in S_H^0$ with $\displaystyle h_i+g_i=\frac{z}{1-z}\,for\,\, i=1,2.$ If $f_1\ast f_2$ is locally univalent and sense preserving, then $f_1\ast f_2\,\in S_H^0$ and is CHD.\end{thma}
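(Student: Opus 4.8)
The plan is to reduce the problem, via the shearing theorem of Clunie and Sheil-Small, to a statement about a single analytic function. Writing $F := f_1 \ast f_2 = H + \overline{G}$ with $H = h_1 \ast h_2$ and $G = g_1 \ast g_2$, the shearing theorem asserts that a locally univalent, sense-preserving harmonic map $H + \overline{G}$ is univalent with range convex in the horizontal direction \emph{if and only if} the analytic function $H - G$ is univalent and maps $E$ onto a CHD domain. Since local univalence and sense-preservation of $F$ are given by hypothesis, the whole theorem collapses to showing that $H-G$ is univalent and CHD; the membership $F \in S_H^0$ then follows because both $f_i \in S_H^0$ have vanishing $b_1$, so the $\overline{z}$-coefficient of $F$ vanishes as well.

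First I would obtain a closed form for $H - G$. The crucial observation is that $\tfrac{z}{1-z} = \sum_{n\ge 1} z^n$ acts as the identity for the Hadamard product on functions vanishing at the origin, so $\tfrac{z}{1-z}\ast \varphi = \varphi$ for every such $\varphi$. Substituting $g_i = \tfrac{z}{1-z} - h_i$ and expanding $g_1 \ast g_2$ then yields
\begin{equation*}
H - G = h_1 \ast h_2 - g_1 \ast g_2 = h_1 + h_2 - \frac{z}{1-z}.
\end{equation*}
Next, from $h_i + g_i = \tfrac{z}{1-z}$ and $g_i' = \omega_i h_i'$ (where $\omega_i = g_i'/h_i'$ is the dilatation of $f_i$, satisfying $\omega_i(0)=0$ and $|\omega_i|<1$) one gets $h_i' = \big[(1-z)^2(1+\omega_i)\big]^{-1}$, and hence the clean identity
\begin{equation*}
(1-z)^2 (H-G)'(z) = \frac{1}{1+\omega_1(z)} + \frac{1}{1+\omega_2(z)} - 1.
\end{equation*}
Because $|\omega_i|<1$ forces $\mathrm{Re}\,\tfrac{1}{1+\omega_i} > \tfrac12$, the right-hand side has strictly positive real part throughout $E$.

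The remaining, and genuinely substantive, step is to deduce that $H - G$ is univalent and CHD from the positivity $\mathrm{Re}\big[(1-z)^2(H-G)'(z)\big] > 0$. This is where I would invoke the analytic characterization of functions convex in a single direction (in the spirit of Royster--Ziegler and Hengartner--Schober): the weight $(1-z)^2$ is exactly the degenerate case in which both ``ends'' of the horizontal channel approach the single boundary point $z=1$, as is visible already in the benchmark computation $f_1 = f_2 = f_0$, for which the identities above collapse to the Koebe function $H-G = \tfrac{z}{(1-z)^2}$, whose range $\mathbb{C}\setminus(-\infty,-\tfrac14]$ is plainly CHD. I expect this last step to be the main obstacle, since the naive sufficient condition $\mathrm{Re}\big[(1-z^2)\varphi'\big] > 0$ actually fails here (it already fails for the Koebe function), so one must set up the correct $(1-z)^2$-weighted criterion and verify that $H-G$ meets its hypotheses, including the required boundary normalization.
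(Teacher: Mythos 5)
A preliminary remark: the paper you were given does not actually prove Theorem A --- it is quoted from Dorff [\ref{do}] --- so the comparison here is with the proof in that cited paper, and your proposal reproduces it essentially step for step. The reduction via the Clunie--Sheil-Small shearing theorem, the use of $\frac{z}{1-z}$ as the convolution identity to obtain $H-G=h_1+h_2-\frac{z}{1-z}$ (Dorff writes this as $\frac{1}{2}\left[(h_1-g_1)+(h_2-g_2)\right]$, which is the same function since $h_i-g_i=2h_i-\frac{z}{1-z}$), and the positivity $\mathrm{Re}\left[(1-z)^2(H-G)'(z)\right]=\mathrm{Re}\left[\frac{1}{1+\omega_1(z)}+\frac{1}{1+\omega_2(z)}-1\right]>0$ are all exactly as in the original, and your computations are correct. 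The one step you leave open --- and flag as ``the main obstacle'' --- is in fact a single citable lemma, and this is precisely how the original argument closes: by a theorem of Pommerenke (\emph{On starlike and close-to-convex functions}, Proc. London Math. Soc. 1963, the same lemma later invoked in [\ref{do and no}]), if $f(0)=0$, $f'(0)\neq 0$ and $\mathrm{Re}\,\frac{zf'(z)}{\varphi(z)}>0$ in $E$ with $\varphi(z)=\frac{z}{(1+ze^{i\theta_1})(1+ze^{i\theta_2})}$, then $f$ is convex in the direction of the real axis; choosing $\theta_1=\theta_2=\pi$ gives $\varphi(z)=\frac{z}{(1-z)^2}$, and the hypothesis becomes verbatim your inequality $\mathrm{Re}\left[(1-z)^2f'(z)\right]>0$. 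In this formulation there is no separate boundary normalization left to verify --- the degenerate weight $(1-z)^2$ you correctly identified already encodes the behaviour at $z=1$ --- and univalence of $H-G$ is automatic, since the same inequality says $H-G$ is close-to-convex with respect to the starlike Koebe function. So, modulo stating and citing that lemma rather than only gesturing at the Royster--Ziegler/Hengartner--Schober circle of ideas, your proof is complete and coincides with the proof of the cited source.
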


\begin{thmb} (See [\ref{do and no}])  Let $\displaystyle f=h+\overline{g}\in K_H^0$ with $\displaystyle h+g=\frac{z}{1-z}$ and $\omega(z)=e^{i\theta}z^n(n\in\mathbb{N}\, and\,\theta\in \mathbb{R})$. If $n=1,2$, then $f_0\ast f\,\in S_H^0$ and is CHD, where $f_0$ is given by (3).\end{thmb}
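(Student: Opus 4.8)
The plan is to deduce Theorem B from Theorem A. Both factors satisfy the hypotheses of Theorem A: we have $f_0,f\in K_H^0\subseteq S_H^0$ and $h_0+g_0=h+g=\frac{z}{1-z}$. Hence it suffices to prove that $F:=f_0\ast f=H+\overline G$ is locally univalent and sense-preserving, for then Theorem A immediately yields $F\in S_H^0$ and CHD. By Lewy's theorem this reduces to showing that the dilatation $\widetilde\omega=G'/H'$ of $F$ satisfies $|\widetilde\omega(z)|<1$ on $E$, together with $H'\neq0$.

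First I would compute $H$ and $G$ explicitly. Writing $h_0=\tfrac12\bigl(\tfrac{z}{1-z}+\tfrac{z}{(1-z)^2}\bigr)$ and $g_0=\tfrac12\bigl(\tfrac{z}{1-z}-\tfrac{z}{(1-z)^2}\bigr)$, and using the elementary Hadamard identities $\frac{z}{1-z}\ast\psi=\psi$ and $\frac{z}{(1-z)^2}\ast\psi=z\psi'$ (valid for $\psi$ with $\psi(0)=0$), I get $H=\tfrac12(h+zh')$ and $G=\tfrac12(g-zg')$, whence $H'=h'+\tfrac12 zh''$ and $G'=-\tfrac12 zg''$. Substituting $g'=\omega h'$, $g''=\omega'h'+\omega h''$ and the logarithmic derivative $h''/h'=\frac{2}{1-z}-\frac{\omega'}{1+\omega}$ (which follows from $h'=[(1-z)^2(1+\omega)]^{-1}$), I would arrive at the closed form
\[
\widetilde\omega(z)=\frac{-z\left[\omega'(1-z)+2\omega(1+\omega)\right]}{2(1+\omega)-z\omega'(1-z)}.
\]

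Now I would specialize $\omega=e^{i\theta}z^n$. For $n=2$ both the bracket and the denominator collapse to $2(1+z\omega)=2(1+e^{i\theta}z^3)$, giving $\widetilde\omega=-e^{i\theta}z^2$, so that $|\widetilde\omega|=|z|^2<1$ and the conclusion is immediate (and $H'\neq0$ since $1+e^{i\theta}z^3\neq0$ on $E$). For $n=1$ one gets
\[
\widetilde\omega=\frac{-e^{i\theta}z\,(1+z+2e^{i\theta}z^2)}{2+e^{i\theta}z+e^{i\theta}z^2},
\]
and verifying $|\widetilde\omega|<1$ is the main obstacle. Writing $a=e^{i\theta}$, $P=2+az+az^2$ and $Q=1+z+2az^2$, the inequality is equivalent to $|z|^2|Q|^2<|P|^2$; a direct expansion should give the clean identity
\[
|P|^2-|z|^2|Q|^2=4(1-|z|^2)\,B,\qquad B=1+|z|^2+|z|^4+\mathrm{Re}(az)\,(1+|z|^2)+\mathrm{Re}(az^2),
\]
and the crude bounds $|\mathrm{Re}(az)|\le|z|$, $|\mathrm{Re}(az^2)|\le|z|^2$ then yield $B\ge(1-|z|)(1-|z|^3)>0$. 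Since $P$ is also the numerator of $H'$ (up to a nonvanishing factor), the same inequality $|P|^2>0$ shows $H'\neq0$, so both requirements hold simultaneously. With local univalence and sense-preservation established, Theorem A closes the argument, giving $f_0\ast f\in S_H^0$ and CHD. The delicate point is purely the $n=1$ modulus estimate; the identity displayed for $|P|^2-|z|^2|Q|^2$ is what makes the positivity transparent, and I would expect most of the effort to go into confirming that expansion.
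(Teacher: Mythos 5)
Your proposal is correct, and the computations check out: with $a=e^{i\theta}$, $P=2+az+az^2$, $Q=1+z+2az^2$ and $r=|z|$, direct expansion indeed gives $|P|^2-r^2|Q|^2=4(1-r^2)\left[1+r^2+r^4+\mathrm{Re}(az)(1+r^2)+\mathrm{Re}(az^2)\right]$, and the bracket is at least $1-r-r^3+r^4=(1-r)(1-r^3)>0$ for $r<1$; likewise $\widetilde\omega=-e^{i\theta}z^2$ when $n=2$, and your convolution formulas $H=\frac12(h+zh')$, $G=\frac12(g-zg')$ coincide with the paper's Lemma 2.1 specialized at $a=0$. However, your route to the crucial modulus estimate is genuinely different from the paper's. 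Within this paper, Theorem B is not proved directly: it is recovered (Remark 2.3) from Theorem 2.2 at $a=0$, whose proof writes $\widetilde\omega_1=-z^ne^{2i\theta}\,p(z)/p^*(z)$ and locates all zeros of the polynomial $p$ in the closed unit disc via the Schur--Cohn algorithm (Lemma B) and Cohn's rule (Lemma A), so that $\widetilde\omega_1$ factors as $-z^ne^{2i\theta}$ times Blaschke-type factors $\frac{z-A_i}{1-\overline{A_i}z}$. Your explicit positivity identity replaces all of that zero-location machinery for $n=1$ by a single elementary factorization, and it has the bonus of yielding $H'\neq0$ at the same time, since $P$ is the numerator of $H'=P/\left[2(1+\omega)^2(1-z)^3\right]$ and $|P|^2\geq 4(1-r^2)B>0$. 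The trade-off is scalability: the determinant/Cohn approach handles all $n$ and the extra parameter $a$ uniformly, which is exactly what the paper's generalization (Theorem 2.2) requires, whereas a hand-expanded identity of your kind becomes unwieldy as $\deg p$ grows. One cosmetic slip worth fixing: for $n=2$ the numerator bracket is $2e^{i\theta}z(1+e^{i\theta}z^3)$, not $2(1+e^{i\theta}z^3)$; combined with the prefactor $-z$ this still gives $\widetilde\omega=-e^{i\theta}z^2$, so the conclusion is unaffected.
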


In Theorem B, the authors established that the requirement that the convolution should be locally univalent and
sense preserving in Theorem A can be dropped. Recently, Li and Ponnusamy  [\ref{li and po},\ref{li and po 2}] also obtained
 some results involving convolutions of right half-plane and slanted right half-plane harmonic mappings. In [\ref{li and po 2}], they proved:
 \begin{thmc} Let $f_0$ be given by (3). If $\displaystyle f=h+\overline{g}$, is a slanted right half-plane mapping, given by $\displaystyle h+e^{-2i\alpha}g=\frac{z}{1-e^{i\alpha} z}\,\,(0\leq\alpha<2\pi)$
 with $\omega(z)=e^{i\theta}z^n(\theta\in \mathbb{R})$, then for $n=1,2$, $f\ast f_0\,\in S_H^0$ and is convex in the direction of $-\alpha$.\end{thmc}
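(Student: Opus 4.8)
The plan is to reduce Theorem C to Theorem B by an appropriate rotation, exploiting the fact that the factor $f_0$ is held fixed while only the slanted mapping carries the angle $\alpha$. Rotating the image of a horizontally convex map by $e^{-i\alpha}$ produces a map convex in the direction $-\alpha$, so the whole content should be a change of angle.

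First I would rotate. For $f=h+\overline{g}$, set $\hat f(z)=e^{i\alpha}f(e^{-i\alpha}z)$, whose analytic and co-analytic parts are $\hat h(z)=e^{i\alpha}h(e^{-i\alpha}z)$ and $\hat g(z)=e^{-i\alpha}g(e^{-i\alpha}z)$. Substituting $w=e^{-i\alpha}z$ into $h(w)+e^{-2i\alpha}g(w)=\frac{w}{1-e^{i\alpha}w}$ and multiplying by $e^{i\alpha}$ yields $\hat h+\hat g=\frac{z}{1-z}$. Since $\hat h'(z)=h'(e^{-i\alpha}z)$ and $\hat g'(z)=e^{-2i\alpha}g'(e^{-i\alpha}z)$, the dilatation transforms as $\hat\omega=\hat g'/\hat h'=e^{-2i\alpha}\omega(e^{-i\alpha}z)=e^{i(\theta-(n+2)\alpha)}z^n$, again a monomial $e^{i\theta'}z^n$ of the \emph{same} degree $n$. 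This rotation preserves the normalization ($\hat h(0)=0$, $\hat h'(0)=1$, and $\hat g'(0)=0$ because $n\ge 1$) and carries the convex slanted half-plane image of $f$ to a rotated, still convex, half-plane, so $\hat f\in K_H^0$.

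The key step is that this rotation commutes with convolution by the fixed mapping $f_0$. Writing $F=f\ast f_0$ with analytic and co-analytic parts $H=h\ast h_0$ and $G=g\ast g_0$, a Taylor-coefficient check gives $e^{i\alpha}H(e^{-i\alpha}z)=\hat h\ast h_0$ and $e^{-i\alpha}G(e^{-i\alpha}z)=\hat g\ast g_0$; hence $\hat F(z):=e^{i\alpha}F(e^{-i\alpha}z)=\hat f\ast f_0$. Applying Theorem B to $\hat f$ is now legitimate, since $\hat f\in K_H^0$, $\hat h+\hat g=\frac{z}{1-z}$, and $\hat\omega=e^{i\theta'}z^n$ with $n\in\{1,2\}$; by commutativity $\hat f\ast f_0=f_0\ast\hat f\in S_H^0$ and is CHD.

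Finally I would transfer back. Since $F(z)=e^{-i\alpha}\hat F(e^{i\alpha}z)$, the image of $F$ is $e^{-i\alpha}$ times the horizontally convex image of $\hat F$, and a domain convex in the direction of the real axis rotated by $e^{-i\alpha}$ is convex in the direction $-\alpha$; the same rotation preserves membership in $S_H^0$, so $f\ast f_0\in S_H^0$ and is convex in the direction $-\alpha$. The main obstacle I anticipate is purely bookkeeping: keeping the factor $e^{i\alpha}$ on the analytic part and $e^{-i\alpha}$ on the co-analytic part straight, so that \emph{simultaneously} $\hat h+\hat g$ collapses to $\frac{z}{1-z}$, the dilatation stays a monomial $e^{i\theta'}z^n$ of the same degree $n$, and the convolution identity $\widehat{f\ast f_0}=\hat f\ast f_0$ holds. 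Once these three alignments are verified, Theorem B supplies the rest.
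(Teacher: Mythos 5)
Your proposal is correct, but it is a genuinely different route from the one in the literature: the paper you were given does not prove Theorem C at all --- it quotes it from Li and Ponnusamy [6], whose own proof (like the proof of Theorem 2.2 in this paper) is a direct computation: substitute $\omega(z)=e^{i\theta}z^n$ into the dilatation $\widetilde{\omega}$ of the convolution, write it as a unimodular multiple of $z^n\,p(z)/p^*(z)$ for an explicit polynomial $p$, and chase the zeros of $p$ through the closed unit disk with Cohn's rule, case by case for $n=1,2$. Your reduction replaces all of that with a conjugation by rotations, and I have checked that the three alignments you flag do hold: with $\hat h(z)=e^{i\alpha}h(e^{-i\alpha}z)$ and $\hat g(z)=e^{-i\alpha}g(e^{-i\alpha}z)$ one gets $\hat h+\hat g=\frac{z}{1-z}$ precisely because the slant condition reads $h+e^{-2i\alpha}g=\frac{z}{1-e^{i\alpha}z}$; the dilatation transforms to $e^{-2i\alpha}\omega(e^{-i\alpha}z)=e^{i(\theta-(n+2)\alpha)}z^n$, a monomial of the same degree; and on Taylor coefficients the rotation acts by $a_k\mapsto e^{i\alpha(1-k)}a_k$ on the analytic part and $b_k\mapsto e^{-i\alpha(1+k)}b_k$ on the co-analytic part, which visibly commutes with coefficientwise multiplication by the fixed coefficients of $h_0$ and $g_0$, giving $e^{i\alpha}(f\ast f_0)(e^{-i\alpha}z)=\hat f\ast f_0$. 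Your verification that $\hat f\in K_H^0$ is also sound, since a slanted right half-plane mapping is by definition univalent onto a (convex) half-plane, the rotation preserves univalence and convexity, and $g'(0)=0$ because $n\ge 1$; Theorem B then applies to $\hat f$, and rotating back converts CHD into convexity in the direction $-\alpha$ while preserving membership in $S_H^0$. As for what each approach buys: the direct Cohn-rule computation of Li--Ponnusamy is self-contained and is exactly the template this paper extends to the family $f_a$ for all $n$ with $a\in\left[\frac{n-2}{n+2},1\right)$ (Theorem 2.2), a setting where your rotation trick is unavailable because replacing $f_0$ by $f_a$ with $a\neq 0$ breaks the symmetry that makes the conjugation collapse; your argument, by contrast, exposes Theorem C as a formal corollary of Theorem B with essentially no computation, and it makes transparent why the $n=1,2$ threshold, together with its failure for $n\ge 3$, transfers verbatim from the unslanted to the slanted case.
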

 In [\ref{do and no}] and [\ref{li and po 2}], authors showed that Theorem B and Theorem C do not hold for $n\geq3$. The aim of the present paper is to investigate the convolution properties of harmonic mappings $\displaystyle f_a\,(a\in \mathbb{R}, |a|<1)$ defined by (2) with right half-plane
mappings $f_n=h+\overline{g}$ where $\displaystyle h+g=\frac{z}{1-z}$ and dilatation $\omega(z)=e^{i\theta}z^n\,(\theta \in \mathbb{R}\,, n\in\mathbb{N}).$
 We establish that $f_a\ast f_n$ are in $S_H$ and are CHD for all $a \in \left[\frac{n-2}{n+2},1\right)$ and for all $n\in \mathbb{N}$. A condition is also determined under which $f_a\ast f_b$ is CHD and belongs to $S_H$.
\section{Main Results}
We begin by proving the following lemma. 
\begin{lemma}  Let $f_a=h_a+\overline{g_a}$ be defined by (2) and $f= h+\overline{g}\,\in S_H$ be the right half-plane mapping, where $\displaystyle h+g=\frac{z}{1-z}$ with dilatation $ \displaystyle\omega(z)=\frac{g'(z)}{h'(z)}\,(h'(z)\not=0, z\in E)$. Then $\widetilde{\omega}_1$, the dilatation of $f_a\ast f$, is given by
\begin{equation} \displaystyle \hspace{-1cm}\widetilde{\omega}_1=\left[\frac{2\omega(a-z)(1+\omega)+z\omega'(a-1)(1-z)}{2(1-az)(1+\omega)+z\omega'(a-1)(1-z)}\right].\end{equation}
\end{lemma}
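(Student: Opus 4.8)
The plan is to compute the dilatation of the convolution straight from its definition, namely $\widetilde{\omega}_1 = (g_a \ast g)'/(h_a \ast h)'$, by reducing each Hadamard product to an ordinary differential expression in the analytic and co-analytic parts. First I would record explicit formulas for the derivatives of the four building blocks. From $h_a + g_a = z/(1-z)$ together with $\omega_a = (a-z)/(1-az)$ one solves $h_a' = (1+\omega_a)^{-1}(1-z)^{-2}$ and $g_a' = \omega_a h_a'$, and since $1+\omega_a = (1+a)(1-z)/(1-az)$ this gives the closed forms $h_a' = (1-az)/[(1+a)(1-z)^3]$ and $g_a' = (a-z)/[(1+a)(1-z)^3]$. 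The identical manipulation applied to $f$ yields $h' = 1/[(1-z)^2(1+\omega)]$ and $g' = \omega/[(1-z)^2(1+\omega)]$.

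The heart of the argument is to pass from the Hadamard product to a differential operator. Writing $1-az = (1-a) + a(1-z)$ and $a-z = -(1-a) + (1-z)$, I decompose $z h_a'$ and $z g_a'$ as linear combinations of $z/(1-z)^2$ and $z/(1-z)^3$. I would then invoke the two convolution identities $[z/(1-z)^2]\ast p = z p'$ and $[z/(1-z)^3]\ast p = zp' + \tfrac12 z^2 p''$, valid for every analytic $p$ with $p(0)=0$ (both follow at once by comparing the coefficient $n(n+1)/2$ against $n + n(n-1)/2$). Combined with $z(h_a \ast h)' = (zh_a')\ast h$, this converts the convolutions into $(h_a \ast h)' = \tfrac{1}{1+a}h' + \tfrac{1-a}{2(1+a)}zh''$ and, analogously, $(g_a \ast g)' = \tfrac{a}{1+a}g' - \tfrac{1-a}{2(1+a)}zg''$.

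Finally I would substitute the expressions for $h'$ and $g'$ into these formulas and differentiate once more, using $\tfrac{d}{dz}\tfrac{1}{1+\omega} = -\omega'/(1+\omega)^2$ and $\tfrac{d}{dz}\tfrac{\omega}{1+\omega} = \omega'/(1+\omega)^2$, to obtain $h''$ and $g''$ in terms of $\omega$ and $\omega'$. Factoring the common quantity $1/[2(1+a)(1-z)^3(1+\omega)^2]$ out of both $(h_a\ast h)'$ and $(g_a\ast g)'$, the surviving numerators collapse, after using $1 + (1-a)z/(1-z) = (1-az)/(1-z)$ and $a - (1-a)z/(1-z) = (a-z)/(1-z)$, to $2(1-az)(1+\omega) + z\omega'(a-1)(1-z)$ and $2\omega(a-z)(1+\omega) + z\omega'(a-1)(1-z)$ respectively. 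Taking the quotient cancels the common factor and delivers exactly (4).

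I expect the main obstacle to be purely organizational rather than conceptual: carrying the second derivatives $h''$ and $g''$ through the algebra and arranging the common denominator so that the factors cancel cleanly. The genuine content of the proof is the single reduction step in which the two Hadamard products are rewritten as the first-order differential expressions above via the convolution identities; once that reduction is in hand, everything else is a controlled but routine simplification.
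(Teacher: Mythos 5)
Your proof is correct and follows essentially the same route as the paper: both reduce the Hadamard products to the differential expressions $(h_a\ast h)'=\frac{1}{1+a}h'+\frac{1-a}{2(1+a)}zh''$ and $(g_a\ast g)'=\frac{a}{1+a}g'-\frac{1-a}{2(1+a)}zg''$, then substitute $h'=\frac{1}{(1+\omega)(1-z)^2}$ and $g'=\omega h'$ and simplify. The only cosmetic difference is that the paper decomposes $h_a$ and $g_a$ themselves against the kernels $\frac{z}{1-z}$ and $\frac{z}{(1-z)^2}$, obtaining $h_a\ast h=\frac12\bigl[h+\frac{1-a}{1+a}zh'\bigr]$ (and similarly for $g$) before differentiating, whereas you differentiate first and decompose $zh_a'$, $zg_a'$ against $\frac{z}{(1-z)^2}$ and $\frac{z}{(1-z)^3}$; the two computations coincide from that point on.
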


\begin{proof} From $\displaystyle h+g=\frac{z}{1-z}$ and  $\displaystyle g'=\omega h',$ we immediately get\\
$\indent\hspace{1.5cm}\displaystyle h'(z)=\frac{1}{(1+\omega(z))(1-z)^2}$\,\,
 and \,\, $\displaystyle h''(z)=\frac{2(1+\omega(z))-\omega'(z)(1-z)}{(1+\omega(z))^2(1-z)^3}.$ \\

Let $$f_a\ast f= h_a\ast h + \overline{g_a\ast g}=h_1+\overline{g_1}\,,\, where$$
 $$h_1(z)=\frac{1}{2}\left[\frac{z}{1-z}+\frac{(1-a)z}{(1+a)(1-z)^2}\right]\ast h$$
 $$\indent\hspace{-1.5cm}=\frac{1}{2}\left[h+\frac{(1-a)}{(1+a)}zh'\right] $$ and
  $$ g_1(z)= \frac{1}{2}\left[\frac{z}{1-z}-\frac{(1-a)z}{(1+a)(1-z)^2}\right]\ast g$$
$$\indent\hspace{-1.5cm}=\frac{1}{2}\left[g-\frac{(1-a)}{(1+a)}zg'\right].$$

 Now the dilatation $\widetilde{\omega}_1$ of $f_a\ast f$ is given by
$$ \begin{array}{clll}
\vspace{.5cm}
\displaystyle \hspace{-5cm}\widetilde{\omega}_1(z)=\frac{g_1'(z)}{h_1'(z)}=\left[\frac{2ag'-(1-a)zg''}{2h'+(1-a)zh''}\right].\\
\vspace{.5cm}
\displaystyle\hspace{-3.5cm}=\left[\frac{2a\omega h'-z(1-a)(\omega h''+\omega'h')}{2h'+(1-a)zh''}\right]
 \end{array} $$
$$\displaystyle \hspace{-2.3cm}=\left[\frac{2\omega(a-z)(1+\omega)+z\omega'(a-1)(1-z)}{2(1-az)(1+\omega)+z\omega'(a-1)(1-z)}\right].$$
\end{proof}
We shall also need the following forms of Cohn's rule and Schur-Cohn's algorithm .
\begin{lema} (Cohn's rule [\ref{ra and sc}, p.375])  Given a polynomial $$t(z)= a_0 + a_1z + a_2z^2+...+a_nz^n$$ of
degree $n$, let $$ t^*(z)=\displaystyle z^n\overline{t\left(\frac{1}{\overline z}\right)} = \overline {a}_n + \overline {a}_{n-1}z + \overline {a}_{n-2}z^2 +...+ \overline{a}_0z^n.$$
Denote by $r$ and $s$ the number of zeros of $t(z)$ inside and on the unit circle $|z|=1$, respectively.
 If $|a_0|<|a_n|,$ then $$ t_1(z)= \frac{\overline {a}_n t(z)-a_0t^*(z)}{z}$$ is of degree $n-1$ and has $r_1=r-1$ and $s_1=s$ number
 of zeros inside the unit circle and on it, respectively.\end{lema}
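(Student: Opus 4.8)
The plan is to work throughout with the numerator polynomial $N(z):=\bar a_n\,t(z)-a_0\,t^*(z)$, so that $z\,t_1(z)=N(z)$, and to count the zeros of $N$ inside and on $|z|=1$ by playing $t$ off against its reflected polynomial $t^*$. Two elementary facts drive everything. First, since the coefficients of $t^*$ are the conjugate-reversed coefficients of $t$, the constant term of $N$ equals $\bar a_n a_0-a_0\bar a_n=0$, so $z\mid N$ and $t_1=N/z$ is genuinely a polynomial; its coefficient of $z^n$ is $|a_n|^2-|a_0|^2$, which is nonzero exactly because $|a_0|<|a_n|$, giving $\deg t_1=n-1$. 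Second, on the circle $|z|=1$ one has $|t^*(z)|=|t(z)|$ (there $z^n\overline{t(1/\bar z)}=z^n\overline{t(z)}$ and $|z^n|=1$), and any zero $\zeta$ of $t$ with $|\zeta|=1$ is automatically a zero of $t^*$ of the same multiplicity because $1/\bar\zeta=\zeta$.

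I would first treat the clean case in which $t$ has no zeros on $|z|=1$. There the strict inequality $|a_0 t^*(z)|=|a_0|\,|t(z)|<|a_n|\,|t(z)|=|\bar a_n t(z)|$ holds on the whole circle, so Rouch\'e's theorem applied to $N=\bar a_n t-a_0 t^*$ shows that $N$ has exactly as many interior zeros as $\bar a_n t$, namely $r$, and the same inequality forbids zeros of $N$ on the circle. Dividing out the simple zero at the origin leaves $t_1$ with $r-1$ interior zeros and none on the circle, i.e.\ $r_1=r-1$ and $s_1=0=s$.

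The main obstacle is the degenerate case in which $t$ does vanish on the circle, where the Rouch\'e inequality is no longer strict. To cope with this I would factor $t=w\,p$, where the monic polynomial $w$ collects precisely the $s$ circle-zeros of $t$ and $p$ has none. Since each circle-zero is its own reciprocal, $w$ is self-inversive: $w^*=c\,w$ for a unimodular constant $c$ with $c\,w(0)=1$. Hence $t^*=c\,w\,p^*$ and $N=w\,(\bar a_n p-a_0 c\,p^*)$. A short computation using $|w(0)|=1$ identifies $a_0 c$ with the constant term of $p$ and verifies $|\,\text{const}(p)\,|<|a_n|=|\,\text{lead}(p)\,|$, so the bracketed factor is exactly $z$ times the Schur transform of $p$, to which the clean case applies. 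Reassembling $t_1=w\cdot\bigl((\bar a_n p-a_0 c\,p^*)/z\bigr)$, the $s$ circle-zeros of $w$ together with the $r-1$ interior zeros inherited from $p$ account for all zeros of $t_1$, yielding $r_1=r-1$ and $s_1=s$. A continuity argument perturbing $t$ to remove its circle-zeros would give the same conclusion, but controlling exactly which zeros return to the circle in the limit is messier than this self-inversive factorisation, which is why I expect the factorisation step to be the crux.
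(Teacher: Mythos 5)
The paper does not prove this lemma at all: it is quoted verbatim as Lemma A from Rahman--Schmeisser [\ref{ra and sc}, p.\ 375] and used as a black box, so there is no in-paper argument to compare yours against. Judged on its own, your proof is correct and complete, and it is essentially the standard textbook argument. The clean case is exactly Rouch\'e: on $|z|=1$ one has $|a_0t^*(z)|=|a_0||t(z)|<|a_n||t(z)|$, so $N=\bar a_n t-a_0t^*$ has $r$ interior zeros and none on the circle, and dividing out the forced zero at the origin (the constant term of $N$ is $\bar a_n a_0-a_0\bar a_n=0$) gives $r_1=r-1$, while the leading coefficient $|a_n|^2-|a_0|^2\neq 0$ gives $\deg t_1=n-1$. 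Your handling of boundary zeros is the genuinely delicate step and it checks out: with $t=wp$, $w$ monic and self-inversive with $w^*=cw$, $cw(0)=1$, the identity $a_0c=cw(0)p(0)=p(0)$ really does exhibit $\bar a_n p-a_0c\,p^*$ as the Schur transform numerator of $p$ (the leading coefficient of $p$ is $a_n$ since $w$ is monic, and $|p(0)|=|a_0|<|a_n|$ preserves the hypothesis), so the clean case applies to $p$ and the factorization $t_1=w\cdot\bigl((\bar a_n p-a_0c\,p^*)/z\bigr)$ accounts for all $n-1$ zeros: $s$ on the circle from $w$, $r-1$ inside from the transform, none extra on the circle since the strict Rouch\'e inequality forbids them. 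Your instinct that the self-inversive factorization is cleaner than a perturbation argument is also right -- the perturbation route requires controlling which zeros limit onto the circle, which is exactly the bookkeeping your factorization avoids.
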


\begin{lemab} (Schur-Cohn's algorithm [\ref{ra and sc}, p.383])  Given a polynomial
 $$ r(z) =a_0+a_1z+\cdots+a_nz^n+a_{n+1}z^{n+1}$$ of degree $n+1$, let \[M_{k}= det\begin{pmatrix}
\overline{B}_{k}\,^T& A_{k} \\
\overline{A}_{k}\,^T& B_{k}
\end{pmatrix} \begin{pmatrix}
k=1,2\cdots,n+1
\end{pmatrix},\] \\ where $A_{k}$ and $B_{k}$ are the triangular matrices

\[A_{k}=\begin{pmatrix}
a_{0}&a_{1}&\cdots &a_{k-1} \\
&a_{0}&\cdots & a_{k-2}\\
 &  &  \ddots & \vdots\\
& &  &a_{0}
\end{pmatrix},\qquad\,\,\,
 B_{k}=\begin{pmatrix}
\overline{a}_{n+1}&\overline{a}_{n}&\cdots &\overline{a}_{(n+1)-k+1} \\
&\overline{a}_{n+1}&\cdots &\overline{a}_{(n+1)-k+2} \\

 & & \ddots & \vdots\\
& &  &\overline{a}_{n+1}
\end{pmatrix}.\]\\
\normalsize{
Then $r(z)$ has all its zeros inside the unit circle $|z|=1$ if and only if the determinants $M_1, M_2\cdots,M_{n+1}$ are all positive.}\end{lemab}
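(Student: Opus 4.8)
The plan is to prove the criterion by induction on the degree, using Cohn's rule (Lemma A) as the engine and the determinants $M_k$ as the bookkeeping device for the successive reductions. Put $N=n+1$ and write $r^{*}(z)=z^{N}\overline{r(1/\bar z)}=\bar a_{N}+\bar a_{N-1}z+\cdots+\bar a_{0}z^{N}$. First I would record the elementary fact that, since the product of the zeros of $r$ equals $(-1)^{N}a_{0}/a_{N}$, having all zeros in $|z|<1$ forces $|a_{0}|<|a_{N}|$; conversely, Cohn's rule says that whenever $|a_{0}|<|a_{N}|$ the polynomial $r_{1}(z)=\bigl(\bar a_{N}\,r(z)-a_{0}\,r^{*}(z)\bigr)/z$ has degree $N-1$ and exactly one fewer zero inside the disk, with the same number on it. Putting the two directions together yields the recursive equivalence that drives everything: $r$ has all its zeros in $|z|<1$ if and only if $|a_{0}|<|a_{N}|$ and $r_{1}$ has all its zeros in $|z|<1$.

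Next I would reformulate the determinants so that the induction can proceed. Writing $B_{k}^{*}=\overline{B}_{k}^{T}$ and $A_{k}^{*}=\overline{A}_{k}^{T}$ for the conjugate transposes, a Schur-complement computation, together with the fact that the upper-triangular Toeplitz matrices $A_{k}$ and $B_{k}$ commute, gives the clean expression
\[
M_{k}=\det\!\begin{pmatrix}B_{k}^{*}&A_{k}\\[2pt] A_{k}^{*}&B_{k}\end{pmatrix}=\det\bigl(B_{k}B_{k}^{*}-A_{k}A_{k}^{*}\bigr).
\]
In particular $B_{k}B_{k}^{*}-A_{k}A_{k}^{*}$ is Hermitian, so each $M_{k}$ is real, and for $k=1$ one finds immediately $M_{1}=|a_{N}|^{2}-|a_{0}|^{2}$, so that $M_{1}>0$ is exactly the first Cohn condition $|a_{0}|<|a_{N}|$. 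This settles the base case and isolates the role of the first reduction.

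The heart of the induction, and the step I expect to be the main obstacle, is a determinantal identity relating the higher minors of $r$ to those of the reduced polynomial $r_{1}$: one must show that each $M_{k+1}$ built from $r$ equals the corresponding Schur-Cohn determinant $\widetilde M_{k}$ built from $r_{1}$, multiplied by a power of $M_{1}=|a_{N}|^{2}-|a_{0}|^{2}$ (hence by a positive factor once $M_{1}>0$). Granting such an identity, the sign pattern $M_{1}>0,\dots,M_{N}>0$ for $r$ becomes equivalent to $M_{1}>0$ together with $\widetilde M_{1}>0,\dots,\widetilde M_{N-1}>0$ for $r_{1}$, and the inductive hypothesis applied to $r_{1}$ closes the argument in tandem with the recursive equivalence of the first paragraph. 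Establishing the factorization amounts to tracking how the Toeplitz data of $r_{1}=(\bar a_{N}r-a_{0}r^{*})/z$ sits inside that of $r$ and $r^{*}$ after one Cohn step, and then carrying this through the block determinant; this bookkeeping is where the real work lies. A conceptually different route would bypass the recursion by appealing to the Hermite/inertia description of zero location, namely that all zeros of $r$ lie in $|z|<1$ precisely when an associated Hermitian Bezoutian form is positive definite, and then using Sylvester's criterion to convert definiteness into positivity of leading minors; but identifying those minors with the present $M_{k}$ is itself a nontrivial computation, so I would keep the Cohn-rule induction as the primary line.
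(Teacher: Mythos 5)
The paper does not prove this statement at all: Lemma B is quoted directly from Rahman and Schmeisser \([\)7, p.~383\(]\) and used as a black box, so there is no in-paper argument to compare against; the comparison must be with the classical proof in that reference, whose route (induction on the degree driven by Cohn's rule) is exactly the one you chose. The parts you actually carry out are correct. For $k=1$ one indeed gets $M_1=|a_{n+1}|^2-|a_0|^2$, so $M_1>0$ is precisely the Cohn condition $|a_0|<|a_{n+1}|$. The reduction $M_k=\det\bigl(B_kB_k^{*}-A_kA_k^{*}\bigr)$ is valid: $A_k$ and $B_k$ are upper triangular Toeplitz, hence polynomials in the same shift matrix and therefore commute, and $B_k$ is invertible since $a_{n+1}\neq 0$; this is the same Schur-complement manipulation the paper itself performs inside the proof of Theorem 2.1, in the special case $\det B_k=1$. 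The recursive equivalence (all zeros of $r$ lie in $|z|<1$ if and only if $|a_0|<|a_{n+1}|$ and all zeros of $r_1$ lie in $|z|<1$) also follows correctly from Lemma A combined with the product-of-zeros observation, including the bookkeeping of boundary zeros, since Cohn's rule preserves the count $s$.

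Nevertheless, as a proof the proposal has a genuine gap, and you flag it yourself: the factorization relating $M_{k+1}(r)$ to $\widetilde M_k(r_1)$ is only ``granted.'' That identity is not routine bookkeeping; it is the entire mathematical content of the Schur--Cohn criterion. Once it is in hand, the induction closes in a few lines; without it, nothing connects the positivity of $M_2,\dots,M_{n+1}$ to the Cohn reduction, so the argument does not yet prove anything beyond the base case. For the record, the shape you conjecture is right and the exponent is forced by homogeneity: $M_{k+1}(r)$ is of degree $2(k+1)$ in the coefficients of $r$, while $\widetilde M_k(r_1)$ is of degree $2k$ in the coefficients of $r_1$, each of which is quadratic in those of $r$, so the only possible relation of the proposed form is $\widetilde M_k(r_1)=M_1^{\,k-1}M_{k+1}(r)$. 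This can be verified by hand for $k=1$ (where one computes $\widetilde M_1(r_1)=M_2(r)$ exactly), but the general case requires real Toeplitz/Bezoutian work --- tracking how the triangular data of $r_1=\bigl(\overline{a}_{n+1}r-a_0r^{*}\bigr)/z$ embeds in block matrices built from $r$ and $r^{*}$ --- and that is precisely what the cited reference supplies. So your proposal is an accurate roadmap of the standard proof, with correct base case, correct reduction, and correct inductive logic, but it is a roadmap rather than a proof until the factorization identity is established; the same caveat applies to your alternative Hermite/inertia route, where identifying the leading principal minors of the Bezoutian form with the present $M_k$ is an equivalent, not lesser, amount of work.
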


We now proceed to state and prove our main result.

\begin{theorem} Let $f_a=h_a+\overline{g_a}$ be given by (2). If $f_n= h+\overline{g}$ is the right half-plane mapping given by
 $\displaystyle h+g=\frac{z}{1-z}$ with
$\omega(z)=e^{i\theta}z^n\,\,(\theta \in \mathbb{R}\,, n\in\mathbb{N})$, then $f_a\ast f_n \, \in S_H$ and is CHD for $a \in [\frac{n-2}{n+2},1)$.\end{theorem}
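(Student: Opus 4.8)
The plan is to establish two facts about the convolution $f_a\ast f_n=h_1+\overline{g_1}$, with $h_1,g_1$ as computed in the proof of the Lemma: (i) the associated analytic function $h_1-g_1$ maps $E$ onto a domain convex in the horizontal direction, and (ii) $f_a\ast f_n$ is locally univalent and sense-preserving, i.e. $|\widetilde\omega_1(z)|<1$ on $E$. Granting both, the Clunie--Sheil-Small shear theorem yields at once that $f_a\ast f_n$ is univalent, belongs to $S_H$ and is CHD. The restriction $a\in[\frac{n-2}{n+2},1)$ should be forced only by (ii); part (i) will hold for every $a\in(-1,1)$ and every $n$.

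For (i), I would differentiate $h_1-g_1=\frac12\big[(h-g)+\frac{1-a}{1+a}z(h'+g')\big]$, using $h+g=\frac{z}{1-z}$, $h'=\frac{1}{(1+\omega)(1-z)^2}$ and $g'=\omega h'$. A short computation gives $2(1-z)^2(h_1-g_1)'=\frac{1-\omega}{1+\omega}+\frac{1-a}{1+a}\cdot\frac{1+z}{1-z}$. Since $|\omega(z)|=|z|^n<1$ on $E$, the map $w\mapsto\frac{1-w}{1+w}$ sends $\omega(E)$ into the right half-plane, while $\frac{1+z}{1-z}$ and the factor $\frac{1-a}{1+a}>0$ also have positive real part. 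Hence $\mathrm{Re}\,[(1-z)^2(h_1-g_1)']>0$ on $E$, which by the standard Royster--Ziegler criterion shows $h_1-g_1$ is locally univalent and maps $E$ onto a CHD domain. This disposes of (i) cleanly and for all admissible $a$.

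For (ii), I would substitute $\omega=e^{i\theta}z^n$ and $z\omega'=ne^{i\theta}z^n$ into the dilatation formula (4). Writing $A(z)=2(a-z)(1+e^{i\theta}z^n)+n(a-1)(1-z)$ and $A^*(z)=z^{n+1}\overline{A(1/\bar z)}$, the numerator of (4) becomes $e^{i\theta}z^nA(z)$, and after verifying the identity $2(1-az)(1+e^{i\theta}z^n)+ne^{i\theta}z^n(a-1)(1-z)=-e^{i\theta}A^*(z)$ the denominator becomes $-e^{i\theta}A^*(z)$. Therefore $\widetilde\omega_1(z)=-z^n\,A(z)/A^*(z)$. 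Because $|A(z)/A^*(z)|=1$ on $|z|=1$, the inequality $|\widetilde\omega_1|<1$ on $E$ reduces, by the maximum principle, to the single assertion that $A^*$ has no zero in $E$, equivalently that all $n+1$ zeros of $A$ lie in $\overline E$; then $\widetilde\omega_1$ is a finite Blaschke-type quotient and $|\widetilde\omega_1|=|z|^n\,|A/A^*|<1$ on $E$.

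The whole problem thus concentrates on the zero-location of $A$, and here Cohn's rule and the Schur--Cohn algorithm enter. Expanding, $A(z)=[a(n+2)-n]+[(n-2)-na]z+2ae^{i\theta}z^n-2e^{i\theta}z^{n+1}$, so its constant and leading coefficients have moduli $|a(n+2)-n|$ and $2$; for $a\in(\frac{n-2}{n+2},1)$ one has $a(n+2)-n\in(-2,2)$, i.e. $|a_0|<|a_{n+1}|$, which is exactly Cohn's hypothesis. Applying Cohn's rule produces a degree-$n$ polynomial $A_1$ with one fewer zero inside $E$, whose constant and leading coefficients I find to be $-2e^{-i\theta}(a-1)[(n+2)a-(n-2)]$ and $(n+2)(1-a)[(n+2)a-(n-2)]$, again with modulus ratio $\frac{2}{n+2}<1$. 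The main obstacle will be to show that this strict inequality $|a_0|<|a_{\deg}|$ persists through all $n+1$ successive reductions; the reduced polynomials stay sparse (only a constant, a sub-leading, and a leading term survive), which should make an inductive modulus estimate tractable. Once that is secured, Cohn's rule forces all $n+1$ zeros of $A$ into $E$, giving $|\widetilde\omega_1|<1$. The endpoint $a=\frac{n-2}{n+2}$, where $|a_0|=|a_{n+1}|=2$ and Cohn's rule degenerates, I would handle separately by a limiting argument placing the critical zero on $|z|=1$, and then combine (i) and (ii) through the shear theorem to conclude that $f_a\ast f_n\in S_H$ and is CHD for all $a\in[\frac{n-2}{n+2},1)$.
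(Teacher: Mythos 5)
Your reductions check out and coincide with the paper's: the identity $\widetilde\omega_1=-z^nA(z)/A^*(z)$ is exactly the paper's equations (5)--(6) with $A(z)=-2e^{i\theta}p(z)$, and your part (i) re-derives, via $2(1-z)^2(h_1-g_1)'=\frac{1-\omega}{1+\omega}+\frac{1-a}{1+a}\cdot\frac{1+z}{1-z}$ and the shear theorem, what the paper simply imports as Theorem A (your computation is correct, and it has the side benefit of not worrying whether Theorem A, stated for $S_H^0$, applies to $f_a\in K_H$). Your first Cohn step is also correct, including the coefficients of $A_1$. But there is a genuine gap, and it sits exactly where the whole difficulty of the theorem lies: the claim that the strict inequality $|a_0|<|a_{\deg}|$ ``persists through all $n+1$ successive reductions'' is left as an unproven expectation, and \emph{as stated it is false}. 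For $\theta=(2m+1)\pi$ one has $A(1)=a(n+2)-n+(n-2)-na-2a+2=0$, so $A$ has a zero \emph{on} the unit circle; since a full chain of $n+1$ successful Cohn steps would force all $n+1$ zeros strictly inside $E$ (Cohn's rule preserves the number of boundary zeros and decrements the interior count until a zero-free constant remains), the strict inequality must break at some stage when $\theta$ is an odd multiple of $\pi$. You flag the degeneracy at the endpoint $a=\frac{n-2}{n+2}$ but miss this $\theta$-degeneracy entirely. The paper sees it explicitly: its Schur--Cohn determinant computation gives $M_{n+1}=\left(\frac14\right)^{n+1}n^n(1-a)^{n+1}(2-n+2a+an)^{n+1}(4+4\cos\theta)$, which vanishes precisely at $\theta=(2m+1)\pi$, and it devotes a separate argument to that case, factoring $p(z)=(z-1)q(z)$ and running iterated Cohn on $q$.

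That said, your all-Cohn route is viable and, once repaired, arguably cleaner than the paper's long determinant computation (the paper itself uses iterated Cohn only in the degenerate $\theta$ case). Carrying your calculation one step further, the iterates collapse to the closed form $A_k=c_k\left[2e^{-i\theta}-(n+2k-2)z^{n-k}+(n+2k)z^{n-k+1}\right]$ with $c_k>0$, for $1\le k\le n$ and $n\ge 2$; the constant-to-leading modulus ratio at each stage is $\frac{2}{n+2k}<1$ \emph{independently of} $\theta$, so the first $n$ steps always succeed, and the surviving linear polynomial $A_n\propto\left(2e^{-i\theta}-(3n-2)\right)+3nz$ has its zero in $E$ iff $\theta\neq(2m+1)\pi$, and at $z=1$ otherwise. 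Since your Blaschke-product argument only needs the zeros in $\overline{E}$ (a unimodular zero just contributes a unimodular constant factor), the boundary zero at $\theta=(2m+1)\pi$ is harmless once acknowledged --- but you must acknowledge it, and you must also treat small $n$ separately (for $n=1$ the degree-one terms of $A$ collide, as in the paper's Case 1). Finally, the endpoint $a=\frac{n-2}{n+2}$ needs no limiting argument: there $p=e^{-i\theta}p^*$, so $\widetilde\omega_1=-e^{i\theta}z^n$ directly, as the paper observes. As written, the proposal establishes the easy half and only gestures at the hard half, so it is a plausible program rather than a proof.
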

\begin{proof} In view of Theorem A, it suffices to show that the dilatation of $f_a\ast f_n=\widetilde{\omega}_1\,$ satisfies $\displaystyle|\widetilde{\omega}_1(z)|<1$, for all $z\in E$. Setting $\omega(z)=e^{i\theta}z^n$ in (4), we get
\begin{equation}
\indent\hspace{-.7cm}\displaystyle\widetilde{\omega}_1(z)=\displaystyle-z^ne^{2i\theta}\left[\frac{z^{n+1}-az^n+ \frac{1}{2}(2+an-n)e^{-i\theta}z+\frac{1}{2}(n-2a-an)e^{-i\theta}}{\frac{1}{2}(n-2a-an)e^{i\theta}z^{n+1}+\frac{1}{2}(2+an-n)e^{i\theta}z^n-az+1}\right]
\end{equation}

$\indent\hspace{1.3cm}=\displaystyle-z^ne^{2i\theta}\frac{p(z)}{p^*(z)},$
\noindent where
\begin{equation}\hspace{-1.8cm}p(z)=z^{n+1}-az^n+ \frac{1}{2}(2+an-n)e^{-i\theta}z+\frac{1}{2}(n-2a-an)e^{-i\theta}
\end{equation}
and \qquad $p^*(z)=z^{n+1}\overline{p\left(\frac{1}{\overline z}\right)}.$\\
\noindent Obviously, if $z_0$ is a zero of $p$ then $\displaystyle\frac{1}{\overline{z_0}}$ is a zero of $p^*$. Hence, if $A_1,A_2\cdots A_{n+1}$ are the zeros of $p$ (not necessarily distinct), then we can write
$$\displaystyle\widetilde{\omega}_1(z)=-z^ne^{2i\theta}\frac{(z-A_1)}{(1-\overline {A}_1 z)}\frac{(z-A_2)}{(1-\overline {A}_2 z)}\cdots\frac{(z-A_{n+1})}{(1-\overline {A}_{n+1} z)}.$$
 Now for $|A_i|\leq1$, $\displaystyle \frac{(z-A_i)}{(1-\overline {A_i} z)}$ maps $\overline{E}=\{z: |z|\leq1\}$ onto $\overline{E}.$ So in order to prove our theorem, we will show that $A_1,A_2,\cdots,A_{n+1}$ lie inside or on the unit circle $|z|=1$ for $a\in \left(\frac{n-2}{n+2},1\right)$
 (in the case $a=\frac{n-2}{n+2},$ from (5) we see that $\displaystyle|\widetilde{\omega}_1(z)|=\displaystyle|-z^ne^{i\theta}|<1$). To do this we will use Lemma B by considering the following two cases.\\

{\bf Case 1.} \emph{When $n=1.$} In this case $\displaystyle p(z)=z^2+\left[-a+\frac{1}{2}(1+a)e^{-i\theta}\right]z+\frac{1}{2}(1-3a)e^{-i\theta}$ and  $a \in(-\frac{1}{3},1)$. Thus, by comparing $p(z)$ and $r(z)$ of Lemma B, we have\\
\[\indent\hspace{-4cm}M_{1}= det\begin{pmatrix}
a_2\,& a_0\\
\overline{a_0}\,& \overline{a_2}
\end{pmatrix}=det\begin{pmatrix}
1\,& \frac{1}{2}(1-3a)e^{-i\theta}\\
\frac{1}{2}(1-3a)e^{i\theta}\,& 1
\end{pmatrix}\]\\
$\indent\hspace{1.3cm}=\displaystyle \frac{3}{4}(1-a)(1+3a)>0,$ \quad and\\
\[\indent\hspace{-9.5cm} M_{2}= det\begin{pmatrix}
a_2&0&a_0&a_1\\
a_1&a_2&0&a_0\\
\overline{a_0}&0&\overline{a_2}&\overline{a_1}\\
\overline{a_1}&\overline{a_0}&0&\overline{a_2}
\end{pmatrix}\]
\[\indent\hspace{.1cm}=det\begin{pmatrix}
1&0&\frac{1}{2}(1-3a)e^{-i\theta}&-a+\frac{1}{2}(1+a)e^{-i\theta}\\
-a+\frac{1}{2}(1+a)e^{-i\theta}&0&1&\frac{1}{2}(1-3a)e^{i\theta}\\
\frac{1}{2}(1-3a)e^{i\theta}&0&1&-a+\frac{1}{2}(1+a)e^{i\theta}\\
-a+\frac{1}{2}(1+a)e^{i\theta}&\frac{1}{2}(1-3a)e^{i\theta}&0&1
\end{pmatrix}\]\\
$\indent\hspace{1cm}\displaystyle=\frac{1}{2}(1-a)^2(1+3a)^2\cos^2\frac{\theta}{2}>0,$ for $\theta\not=(2m+1)\pi,\,m\in\mathbb{N}$.\\

If $\theta=(2m+1)\pi,\,m\in\mathbb{N},$ then $p(z)=z^2-\frac{1}{2}(1+3a)z-\frac{1}{2}(1-3a).$
Obviously, $z=1$ and $z= -\frac{1}{2}(1-3a)$ are zeros of $p(z)$ and lie on and inside the unit circle $|z|=1,$ respectively, for
  $-\frac{1}{3}<a<1.$\\

\noindent{\bf Case 2.} \emph{When $n\geq2$}. In this case
$ p(z)=z^{n+1}-az^n+ \frac{1}{2}\left(2+an-n\right)e^{-i\theta}z+\frac{1}{2}\left(n-2a-an\right)e^{-i\theta}.$\\
Again comparing $p(z)$ and $r(z)$, let

\[\indent\hspace{-6cm}M_{k}= det\begin{pmatrix}
\overline{B}_{k}\,^T& A_{k} \\
\overline{A}_{k}\,^T& B_{k}
\end{pmatrix}\,\,(k=1,2,3,\cdots,n+1),\] where $A_k$ and $B_k$ are as defined in Lemma B with $a_{n+1}=1,$\,$a_{n}=-a$,\,$a_{n-1}=0$,$\cdots,$ $a_{2}=0$,\,$a_{1}=\frac{1}{2}(2+an-n)e^{-i\theta}$\,and\,$a_{0}=\frac{1}{2}(n-2a-an)e^{-i\theta}$.
Since $a_{n+1}=1,$ therefore $det(B_{k})=1$ and so,
\[\begin{pmatrix}
\overline{B}_{k}\,^T& A_{k} \\
\overline{A}_{k}\,^T& B_{k}
\end{pmatrix}\begin{pmatrix}
I& \Huge{0} \\
-B_{k}^{-1}\overline{A}_{k}\,^T& I
\end{pmatrix}=\begin{pmatrix}
\overline{B}_{k}\,^T- A_{k}B_{k}^{-1}\overline{A}_{k}\,^T& A_{k} \\
0 & B_{k}
\end{pmatrix}\] which gives
 \[\indent\hspace{-1cm}M_{k}= det\begin{pmatrix}
\overline{B}_{k}\,^T& A_{k} \\
\overline{A}_{k}\,^T& B_{k}
\end{pmatrix}=det\begin{bmatrix}
\overline{B}_{k}\,^T- A_{k}B_{k}^{-1}\overline{A}_{k}\,^T\end{bmatrix}.\]

Now we consider the following two subcases. \\

{\bf Subcase 1.}\emph{ When $k=1,2,3\cdots,n.$} We will show that in this case,\\
$\displaystyle M_k=\left(\frac{1}{4}\right)^k n^{k-1}(n+2k)(2-n+2a+an)^k(1-a)^k,$ which is positive for $a \in \left(\frac{n-2}{n+2},1\right).$\\

In this case $A_k$ and $B_k$ are the following $k \times k$ matrices;
\scriptsize{
\[\indent\hspace{-2.5cm}A_{k}=\begin{pmatrix}
a_{0}&a_{1}&a_2 &\cdots &a_{n-1} \\
0&a_{0}&a_1&\cdots &a_{n-2} \\
0& 0&a_0&\cdots &a_{n-3}\\
\vdots & \vdots &\vdots& \ddots & \vdots\\
0&0&0&\cdots &a_{0}
\end{pmatrix}=\begin{pmatrix}
a_{0}&a_{1}&0 &\cdots &0 \\
0&a_{0}&a_{1}&\cdots &0 \\
0& 0&a_{0}&\cdots &0\\
\vdots & \vdots &\vdots& \ddots & \vdots\\
0&0&0&\cdots &a_{0}
\end{pmatrix}.\quad B_{k}=\begin{pmatrix}
\overline{a}_{n+1}&\overline{a}_{n}&\overline{a}_{n-1} &\cdots &\overline {a}_2 \\
0&\overline{a}_{n+1}&\overline{a}_n&\cdots &\overline{a}_3  \\
0& 0&\overline{a}_{n+1}&\cdots &\overline{a}_4\\
\vdots & \vdots &\vdots& \ddots & \vdots\\
0&0&0&\cdots &\overline{a}_{n+1}
\end{pmatrix}=\begin{pmatrix}
{1}&{-a}&0 &\cdots &0 \\
0&1&-a&\cdots &0 \\
0& 0&1&\cdots &0\\
\vdots & \vdots &\vdots& \ddots & \vdots\\
0&0&0&\cdots &1
\end{pmatrix}.\]}\\
\normalsize
We can compute
\scriptsize{
\[\indent\hspace{-2.4cm}\overline{B}_{k}^{\,\,T}-A_{k}\,B_{k}^{-1}\,\overline{A}_{k}^{\,\,T}=\begin{pmatrix}
1-\overline{a_{0}}a_{0}-\overline{a_1}(aa_0+a_1)&-\overline{a_0}(aa_0+a_1)-\overline{a_1}a(aa_0+a_1)&a[-\overline{a_0}(aa_0+a_1)-\overline{a_1}a(aa_0+a_1)] &\cdots &-\overline{a_0}a^{k-2}(aa_0+a_1) \\
-a-\overline{a_1}a_0&1-\overline{a_0}a_0-\overline{a_1}(aa_0+a_1)&-\overline{a_0}(aa_0+a_1)-\overline{a_1}a(aa_0+a_1)&\cdots &-\overline{a_0}a^{k-3}(aa_{0}+a_1) \\
0&-a-\overline{a_1}a_0 &1-\overline{a_0}a_0-\overline{a_1}(aa_0+a_1)&\cdots &-\overline{a_0}a^{k-4}(aa_{0}+a_1)\\
\vdots & \vdots &\vdots& \ddots & \vdots\\
0&0&0&\cdots &1-\overline{a_0}{a_0}
\end{pmatrix}.\]
\normalsize Now\\
\noindent{\bf (a)}  $1-\overline{a_0}a_0-\overline{a_1}(aa_0+a_1)=\frac{1}{4}n(2-n+2a+an)(1-a)(2-a).$\\
${\bf (b)} -\overline{a_0}(aa_0+a_1)-\overline{a_1}a(aa_0+a_1)=-\frac{1}{4}n(2-n+2a+an)(1-a)^3.$\\
\noindent{\bf (c)} $\overline{a_0}a^{k-m}(aa_0+a_1)=\frac{1}{4}a^{k-m}(n-2a-an)(2-n+2a+an)(1-a),\,m=1,2,3,\cdots,k.$\\
\noindent{\bf (d)} $ -a-\overline{a_1}a_0=-\frac{1}{4}n(2-n+2a+an)(1-a).\\$
\noindent{\bf (e)} $ 1-\overline{a_0}{a_0}=\frac{1}{4}(n+2)(2-n+2a+an)(1-a).$\\

Therefore,
\scriptsize{
\[\indent\hspace{-3cm}M_k=\left[\left(\frac{1}{4}\right)^k n^{k-1}(2-n+2a+an)^k(1-a)^k\right]det\begin{pmatrix}
(2-a)&-(1-a)^2 &-a(1-a)^2 &\cdots &-a^{k-2}(n-2a-an)\\
-1&(2-a)&-(1-a)^2&\cdots &- a^{k-3}(n-2a-an) \\
0& -1&(2-a)&\cdots &-a^{k-4}(n-2a-an)\\
\vdots & \vdots &\vdots& \ddots & \vdots\\
0&0&0&\cdots &(n+2)
\end{pmatrix}\]}
\scriptsize{
\[\indent\hspace{-3cm}=\left[\left(\frac{1}{4}\right)^k n^{k-1}(2-n+2a+an)^k(1-a)^k\right]det\begin{pmatrix}
(2-a)&-(1-a)^2 &-a(1-a)^2 &\cdots &-a^{k-2}(n-2a-an)\\
0&\frac{3-2a}{2-a}&-\frac{2(1-a)^2}{2-a}&\cdots &-\frac{2a^{k-3}(n-2a-an)}{2-a} \\
0& 0&\frac{4-3a}{3-2a}&\cdots &-\frac{3a^{k-4}(n-2a-an)}{3-2a}\\
\vdots & \vdots &\vdots& \ddots & \vdots\\
0&0&0&\cdots &\frac{n+2k}{k-(k-1)a}
\end{pmatrix}\]}
\normalsize
$\indent\hspace{-1.5cm}=\left(\frac{1}{4}\right)^k n^{k-1}(n+2k)(2-n+2a+an)^k(1-a)^k.$\\

\normalsize
\indent\hspace{-.7cm}{\bf Subcase 2.} \emph{When $k=n+1$}. In this case,\\
\scriptsize{
\[\indent\hspace{-3cm}A_{n+1}=\begin{pmatrix}
a_{0}&a_{1}&a_{2} &\cdots &{a_n} \\
0&a_{0}&a_1&\cdots &a_{n-1} \\
0& 0&a_0&\cdots &a_{n-2}\\
\vdots & \vdots &\vdots& \ddots & \vdots\\
0&0&0&\cdots &a_{0}
\end{pmatrix}=\begin{pmatrix}
a_{0}&a_{1}&0 &\cdots &-a \\
0&a_{0}&a_1&\cdots &0 \\
0& 0&a_0&\cdots &0\\
\vdots & \vdots &\vdots& \ddots & \vdots\\
0&0&0&\cdots &a_{0}
\end{pmatrix}\quad and \quad B_{n+1}=\begin{pmatrix}
\overline{a}_{n+1}&\overline{a}_{n}&\overline {a}_{n-1} &\cdots &\overline{ a}_1 \\
0&\overline{a}_{n+1}&\overline{a}_n&\cdots &\overline{ a}_2 \\
0& 0&\overline{a}_{n+1}&\cdots &\overline{ a}_3\\
\vdots & \vdots &\vdots& \ddots & \vdots\\
0&0&0&\cdots &\overline{a}_{n+1}
\end{pmatrix}=\begin{pmatrix}
1&-a&0 &\cdots &\overline{a}_1 \\
0&1&-a&\cdots &0 \\
0& 0&1&\cdots &0\\
\vdots & \vdots &\vdots& \ddots & \vdots\\
0&0&0&\cdots &1
\end{pmatrix}.\]}
\normalsize
 We compute that
\scriptsize{
\[ \indent\hspace{-4cm}\overline{B}_{n+1}^{\,\,T}-A_{n+1}B_{n+1}^{-1}\overline{A}_{n+1}^{\,\,T}=\left( \begin{array}{llll}
1-a_0\overline{a}_0-\overline{a}_1(aa_0+a_1)+a^n(aa_0+a_1)-a(a_0\overline{a}_1+a)&\quad-\overline{a}_0(aa_0+a_1)-a\overline{a}_1(aa_0+a_1)&\quad\cdots\\
-a-\overline{a}_1a_0+a^{n-1}(aa_0+a_1)&\quad1-\overline{a}_0a_0-\overline{a}_1(aa_0+a_1)&\quad\cdots \\
a^{n-2}(aa_0+a_1)&\quad-a-\overline{a}_1a_0 &\quad\cdots\\
\vdots &\qquad \vdots &\quad \ddots \\
aa_0+a_1&\qquad0&\quad\cdots
\end{array} \right.\]}

\[\indent\hspace{3.5cm}\left.\begin{array}{clll}
a^{n-2}[-\overline{a}_0(aa_0+a_1)-a\overline{a}_1(aa_0+a_1)]+\overline{a}_1(\overline{a}_1a_0+a) &\qquad-\overline{a}_0a^{n-1}(aa_0+a_1)+\overline{a}_0(a_0\overline{a}_1+a) \\
a^{n-3}[-\overline{a}_0(aa_0+a_1)-a\overline{a}_1(aa_0+a_1)] &\qquad -\overline{a}_0a^{n-2}(aa_0+a_1)\\
a^{n-4}[-\overline{a}_0(aa_0+a_1)-a\overline{a}_1(aa_0+a_1)] &\qquad-\overline{a}_0a^{n-3}(aa_0+a_1)\\
\vdots &\qquad \vdots\\
-a-\overline{a}_1a_0 &\qquad 1-a_0\overline{a}_0
\end{array} \right)\]\\
\normalsize Let $E_j$ be the $j^{th}$ column of $\overline{B}_{n+1}^{\,\,T}-A_{n+1}B_{n+1}^{-1}\overline{A}_{n+1}^{\,\,T},$ where $j=1,2,\dots,n+1.$ Note that for $E_m (m=2,3,\cdots,n-1),$ the column entries are identical to those of $\overline{B}_{k}^{\,\,T}-A_{k}B_{k}^{-1}\overline{A}_{k}^{\,\,T}$ in {Subcase 1}. However, the entries for $E_1$, $E_n$, and $E_{n+1}$ are different. We split $E_1$, $E_n$ and $E_{n+1}$ in the following way:\\
$E_1=F_1+G_1+H_1$,  $E_n=F_n+G_n$, $E_{n+1}=F_{n+1}+G_{n+1},$ where\\
$F_1^{T}=[1-a_0\overline{a}_0-\overline{a}_1(aa_0+a_1),-a-\overline{a}_1a_0,0,\cdots,0]$\\
$G_1^{T}=[a^n(aa_0+a_1),a^{n-1}(aa_0+a_1),a^{n-2}(aa_0+a_1),\cdots,(aa_0+a_1)]$\\
$H_1^{T}=[-a(a_0\overline{a}_1+a),0,0,\cdots,0]$\\
$F_n^{T}=[-a^{n-2}\overline{a}_0(aa_0+a_1)-a^{n-1}\overline{a}_1(aa_0+a_1),-a^{n-3}\overline{a}_0(aa_0+a_1)-a^{n-2}\overline{a}_1(aa_0+a_1),\cdots-(a+a_0\overline{a}_1)]$\\
$G_n^{T}=[-\overline{a}_1(\overline{a}_1a_0+a),0,0,\cdots,0]$\\
$F_{n+1}^{T}=[-\overline{a}_0a^{n-1}(aa_0+a_1),-\overline{a}_0a^{n-2}(aa_0+a_1),\cdots,1-\overline{a}_0a_0]$\\
$G_{n+1}^{T}=[\overline{a}_0(a_0\overline{a}_1+a),0,0,\cdots,0].$\\

\indent\hspace{-1cm} Now $det[\overline{B}_{n+1}^{\,\,T}-A_{n+1}B_{n+1}^{-1}\overline{A}_{n+1}^{\,\,T}]$=$det[E_1E_2\cdots E_nE_{n+1}]$\\
\indent\hspace{1cm}$= det[F_1E_2\cdots F_nF_{n+1}]$ + \,$det[G_1E_2\cdots F_nF_{n+1}]$ + \,$det[H_1E_2\cdots F_nF_{n+1}]$\\
\indent\hspace{1cm}+ \,$det[F_1E_2\cdots F_nG_{n+1}]$ + $det[G_1E_2\cdots F_nG_{n+1}]$ + \,$det[H_1E_2\cdots F_nG_{n+1}]$\\
\indent\hspace{1cm}+ \,$det[F_1E_2\cdots G_nF_{n+1}]$ + \,$det[G_1E_2\cdots G_nF_{n+1}]$ + $det[H_1E_2\cdots G_nF_{n+1}]$\\
\indent\hspace{1cm}+ \,$det[F_1E_2\cdots G_nG_{n+1}]$ + \,$det[G_1E_2\cdots G_nG_{n+1}]$ + \,$det[H_1E_2\cdots G_nG_{n+1}].$\\
We will compute each of these determinants. From Subcase 1,
\begin{equation}
\indent\hspace{-1cm}det[F_1E_2\cdots F_nF_{n+1}]=\left(\frac{1}{4}\right)^{n+1} n^n(3n+2)(2-n+2a+an)^{n+1}(1-a)^{n+1}.
\end{equation}
Also,\quad $det[F_1E_2\cdots F_nG_{n+1}]=(-1)^n[\overline{a}_0(a_0\overline{a}_1+a)][(-1)^n((a+\overline{a}_1a_0)^n)]$
\begin{equation}
\indent\hspace{1cm}=\left(\frac{1}{4}\right)^{n+1} n^n(2-n+2a+an)^{n+1}(1-a)^{n+1}\left[\frac{1}{2}e^{i\theta}n(n-2a-an)\right],
\end{equation}
and $det[H_1E_2\cdots F_nF_{n+1}]=-a(a_0\overline{a}_1+a) det \left[\overline{B}_{n}^{\,\,T}-A_{n}\,B_{n}^{-1}\,\overline{A}_{n}^{\,\,T}\right]$\\
\begin{equation}
\indent\hspace{1cm}=-\left(\frac{1}{4}\right)^{n+1}a n^n(2-n+2a+an)^{n+1}(1-a)^{n+1}(3n).
\end{equation}
Next\quad
$det[G_1E_2\cdots F_nF_{n+1}]$\scriptsize{
\[\indent\hspace{-2cm}=\left(\frac{1}{4}\right)^{n+1}n^{n-1}(2-n+2a+an)^{n+1}(1-a)^{n+1}2e^{-i\theta}det\begin{pmatrix}
a^n&-(1-a)^2 & -a(1-a)^2 &\cdots &- a^{n-1}(n-2a-an) \\
a^{n-1}&(2-a)&-(1-a)^2&\cdots &-a^{n-2}(n-2a-an)\\
a^{n-2}& -1&(2-a)&\cdots & a^{n-3}(n-2a-an)\\
\vdots & \vdots &\vdots& \ddots & \vdots\\
1&0&0&\cdots &(n+2)
\end{pmatrix}\]}
\scriptsize{
\[\indent\hspace{-2cm}=\left(\frac{1}{4}\right)^{n+1}n^{n-1}(2-n+2a+an)^{n+1}(1-a)^{n+1}2e^{-i\theta}det\begin{pmatrix}
a^n&-(1-a)^2 & -a(1-a)^2 &\cdots &- a^{n-1}(n-2a-an) \\
0&\frac{1}{a}&0&\cdots &0\\
0& \frac{1-2a}{a^2}&\frac{1}{a}&\cdots &0\\
\vdots & \vdots &\vdots& \ddots & \vdots\\
0&\frac{(1-a)^2}{a^n}&\frac{(1-a)^2}{a^{n-1}}&\cdots &\frac{n}{a}
\end{pmatrix}\]}
\normalsize
\begin{equation}
\indent\hspace{-3.5cm}=\left(\frac{1}{4}\right)^{n+1}n^n(2-n+2a+an)^{n+1}(1-a)^{n+1}2e^{-i\theta}.
\end{equation}
Also,
$det[G_1E_2\cdots F_nG_{n+1}]$
\scriptsize{\[\indent\hspace{-2cm}=(-1)^n\left(\frac{1}{4}\right)^{n+1}n^{n}(2-n+2a+an)^{n+1}(1-a)^{n+1}(n-2a-an)det\begin{pmatrix}
a^{n-1}&(2-a) & -(1-a)^2 &\cdots &- a^{n-3}(1-a)^2 \\
a^{n-2}&-1&(2-a)&\cdots &-a^{n-4}(1-a)^2\\
a^{n-3}& 0&-1&\cdots & -a^{n-5}(1-a)^2\\
\vdots & \vdots &\vdots& \ddots & \vdots\\
1&0&0&\cdots &-1
\end{pmatrix}.\]}
\indent\hspace{-2.6cm}\normalsize Now
\scriptsize{\[\indent\hspace{-10.9cm}det\begin{pmatrix}
a^{n-1}&(2-a) & -(1-a)^2 &\cdots &- a^{n-3}(1-a)^2 \\
a^{n-2}&-1&(2-a)&\cdots &-a^{n-4}(1-a)^2\\
a^{n-3}& 0&-1&\cdots & -a^{n-5}(1-a)^2\\
\vdots & \vdots &\vdots& \ddots & \vdots\\
1&0&0&\cdots &-1
\end{pmatrix}=\]
\[\indent\hspace{-1.9cm}a^{n-1} det\begin{pmatrix}
-1&(2-a) & -(1-a)^2 &\cdots &- a^{n-4}(1-a)^2 \\
0&-1&(2-a)&\cdots &-a^{n-5}(1-a)^2\\
0& 0&-1&\cdots & -a^{n-6}(1-a)^2\\
\vdots & \vdots &\vdots& \ddots & \vdots\\
0&0&0&\cdots &-1
\end{pmatrix}-a^{n-2} det\begin{pmatrix}
(2-a) & -(1-a)^2&-a(1-a)^2 &\cdots &- a^{n-3}(1-a)^2 \\
0&-1&(2-a)&\cdots &-a^{n-5}(1-a)^2\\
0& 0&-1&\cdots & -a^{n-6}(1-a)^2\\
\vdots & \vdots &\vdots& \ddots & \vdots\\
0&0&0&\cdots &-1
\end{pmatrix}+\]\[\indent\hspace{-1.6cm}a^{n-3} det\begin{pmatrix}
(2-a) & -(1-a)^2&-a(1-a)^2 &\cdots &- a^{n-3}(1-a)^2 \\
-1&(2-a)&-(1-a)^2&\cdots &-a^{n-4}(1-a)^2\\
0&-1&(2-a)&\cdots & -a^{n-6}(1-a)^2\\
\vdots & \vdots &\vdots& \ddots & \vdots\\
0&0&0&\cdots &-1
\end{pmatrix}-a^{n-4} det\begin{pmatrix}
(2-a) & -(1-a)^2&-a(1-a)^2 &\cdots &- a^{n-3}(1-a)^2 \\
-1&(2-a)&-(1-a)^2&\cdots &-a^{n-4}(1-a)^2\\
0&-1&(2-a)&\cdots & -a^{n-5}(1-a)^2\\
\vdots & \vdots &\vdots& \ddots & \vdots\\
0&0&0&\cdots &-1
\end{pmatrix}+\cdots\]}
\normalsize{
\indent= $a^{n-1}(-1)^{n-1}+a^{n-2}(-1)^{n-1}(2-a)+a^{n-3}(-1)^{n-1}(3-2a)+a^{n-4}(-1)^{n-1}(4-3a)\cdots+\indent\hspace{.5cm}a(-1)^{n-1}[(n-1)-(n-2)a]+(-1)^{n-1}[n-(n-1)a]$\\
$\indent=(-1)^{n-1}n.$ Therefore,
\begin{equation}
det[G_1E_2\cdots F_nG_{n+1}]=-\left(\frac{1}{4}\right)^{n+1}n^n(2-n+2a+an)^{n+1}(1-a)^{n+1}(n-2a-an)n.
\end{equation}
In addition\\
$\indent\hspace{-1cm}det[F_1E_2\cdots G_nF_{n+1}]$\scriptsize{\[=det\begin{pmatrix}
1-\overline{a_{0}}a_{0}-\overline{a_1}(aa_0+a_1)&-\overline{a_0}(aa_0+a_1)-\overline{a_1}a(aa_0+a_1) &\cdots&\overline{a_1}(\overline{a_1}a_0+a) &-\overline{a_0}a^{n-1}(aa_0+a_1) \\
-a-\overline{a_1}a_0&1-\overline{a_0}a_0-\overline{a_1}(aa_0+a_1)&\cdots&0 &-\overline{a_0}a^{n-2}(aa_{0}+a_1) \\
0&-a-\overline{a_1}a_0&\cdots &0 &-\overline{a_0}a^{n-3}(aa_{0}+a_1)\\
\vdots & \vdots &\vdots& \ddots & \vdots\\
0&0&0&\cdots &1-\overline{a_0}{a_0}
\end{pmatrix}\]}

\normalsize
\begin{equation}
=\left(\frac{1}{4}\right)^{n+1}n^n(2-n+2a+an)^{n+1}(1-a)^{n+1}\left[\frac{e^{i\theta}}{2}(n+2)(2-n+an)\right].
\end{equation}
Also,\,\,$det[G_1E_2\cdots G_nF_{n+1}]$\scriptsize{\[\indent\hspace{1cm}=det\begin{pmatrix}
a^n(aa_0+a_1)&-\overline{a_0}(aa_0+a_1)-\overline{a_1}a(aa_0+a_1) &\cdots&\overline{a_1}(\overline{a_1}a_0+a) &-\overline{a_0}a^{n-1}(aa_0+a_1) \\
a^{n-1}(aa_0+a_1)&1-\overline{a_0}a_0-\overline{a_1}(aa_0+a_1)&\cdots&0 &-\overline{a_0}a^{n-2}(aa_{0}+a_1) \\
a^{n-2}(aa_0+a_1)&-a-\overline{a_1}a_0&\cdots &0 &-\overline{a_0}a^{n-3}(aa_{0}+a_1)\\
\vdots & \vdots &\vdots& \ddots & \vdots\\
(aa_0+a_1)&0&0&\cdots &1-\overline{a_0}{a_0}
\end{pmatrix}\]}
\scriptsize{
\[\indent\hspace{-1cm}=\left(\frac{1}{4}\right)^{n+1}n^{n-1}(2-n+2a+an)^{n+1}(1-a)^{n+1}2e^{-i\theta}(-1)^{n+1}\overline{a_1}det\begin{pmatrix}
a^{n-1}&(2-a) &-(1-a)^2 &\cdots &- a^{n-2}(n-2a-an) \\
a^{n-2}&-1&(2-a)&\cdots &-a^{n-3}(n-2a-an)\\
a^{n-3}& 0&-1&\cdots& -a^{n-4}(n-2a-an)\\
\vdots & \vdots &\vdots& \ddots & \vdots\\
1&0&0&\cdots &n+2
\end{pmatrix}.\]}
\indent\hspace{-2.6cm}\normalsize Now \scriptsize{\[\indent\hspace{-10.5cm}det\begin{pmatrix}
a^{n-1}&(2-a) & -(1-a)^2 &\cdots &- a^{n-2}(n-2a-an) \\
a^{n-2}&-1&(2-a)&\cdots &-a^{n-3}(n-2a-an)\\
a^{n-3}& 0&-1&\cdots & -a^{n-4}(n-2a-an)\\
\vdots & \vdots &\vdots& \ddots & \vdots\\
1&0&0&\cdots &n+2
\end{pmatrix}=\]
\[\indent\hspace{-1.9cm}a^{n-1} det\begin{pmatrix}
-1&(2-a) & -(1-a)^2 &\cdots &- a^{n-3}(n-2a-an) \\
0&-1&(2-a)&\cdots &-a^{n-4}(n-2a-an)\\
0& 0&-1&\cdots & -a^{n-5}(n-2a-an)\\
\vdots & \vdots &\vdots& \ddots & \vdots\\
0&0&0&\cdots &n+2
\end{pmatrix}-a^{n-2} det\begin{pmatrix}
(2-a) & -(1-a)^2&-a(1-a)^2 &\cdots &- a^{n-2}(n-2a-an) \\
0&-1&(2-a)&\cdots &-a^{n-4}(n-2a-an)\\
0& 0&-1&\cdots & -a^{n-5}(n-2a-an)\\
\vdots & \vdots &\vdots& \ddots & \vdots\\
0&0&0&\cdots &n+2
\end{pmatrix}+\]\tiny\[\indent\hspace{-1.3cm}a^{n-3} det\begin{pmatrix}
(2-a) & -(1-a)^2&-a(1-a)^2 &\cdots &- a^{n-2}(n-2a-an) \\
-1&(2-a)&-(1-a)^2&\cdots &-a^{n-3}(n-2a-an)\\
0&0&-1&\cdots & -a^{n-5}(n-2a-an)\\
\vdots & \vdots &\vdots& \ddots & \vdots\\
0&0&0&\cdots &n+2
\end{pmatrix}+\cdots(-1)^{n+1}det\begin{pmatrix}
(2-a) & -(1-a)^2&-a(1-a)^2 &\cdots &- a^{n-2}(n-2a-an) \\
-1&(2-a)&-(1-a)^2&\cdots &-a^{n-3}(n-2a-an)\\
0&-1&(2-a)&\cdots & -a^{n-4}(n-2a-an)\\
\vdots & \vdots &\vdots& \ddots & \vdots\\
0&0&0&\cdots &-(n-2a-an)
\end{pmatrix}\]}}}
\normalsize
=$a^{n-1}(-1)^{n-2}(n+2)+a^{n-2}(-1)^{n-2}(n+2)(2-a)+a^{n-3}(-1)^{n-2}(n+2)(3-2a)+\cdots+(-1)^{n-2}(n-1)[n-(n+2)a]$\\
=$(-1)^{n-2}n(n-1).$ Hence,
\begin{equation}
det[G_1E_2\cdots G_nF_{n+1}]=-\left(\frac{1}{4}\right)^{n+1}n^n(2-n+2a+an)^{n+1}(1-a)^{n+1}(2+an-n)(n-1).
\end{equation}
\normalsize
Finally,\\
$det [F_1E_2\cdots G_nG_{n+1}]=det [H_1E_2\cdots G_nF_{n+1}]=det [H_1E_2\cdots G_nG_{n+1}]=$
\begin{equation}
det[G_1E_2\cdots G_nG_{n+1}]=det[H_1E_2\cdots F_nG_{n+1}]=0
\end{equation}

Using equations $(7)$-$(14)$, we get\\
$M_{n+1}=det[\overline{B}_{n+1}^{\,\,T}-A_{n+1}\,B_{n+1}^{-1}\,\overline{A}_{n+1}^{\,\,T}]=\displaystyle \left(\frac{1}{4}\right)^{n+1}n^n(1-a)^{n+1}(2-n+2a+an)^{n+1}$
$[(3n+2)+\frac{e^{i\theta}}{2}n(n-2a-an)-3an+2e^{-i\theta}-n(n-2a-an)
 +\frac{e^{i\theta}}{2}(n+2)(2+an-n)-(n-1)(2+an-n)]$\\
$=(\frac{1}{4})^{n+1}n^n(1-a)^{n+1}(2-n+2a+an)^{n+1}(4+4\cos\theta)$.\\
 Therefore $M_{n+1}>0,$ if $\theta\not=(2m+1)\pi,m\in\mathbb{N}.$\\
When $\theta =(2m+1)\pi, m\in\mathbb{N},$ then $p(z)=z^{n+1}-az^n+\frac{1}{2}(n-2-an)z-\frac{1}{2}(n-2a-an).$ As $z=1$ is a zero of $p(z),$ therefore we can write
$$p(z)=(z-1)[z^n+(1-a)z^{n-1}+(1-a)z^{n-2}+\cdots+(1-a)z+\frac{1}{2}(n-2a-an)]$$
$$\indent\hspace{-9.8cm}=(z-1)q(z).$$
It suffices to show that zeros of $q(z)$ lie inside $|z|=1$. Since $|\frac{1}{2}(n-2a-an)|<1$ whenever  $a \in \left(\frac{n-2}{n+2},1\right)$, by applying Lemma A on $q(z)$ (by comparing it with $t(z)$), we get \\

$\indent\hspace{-0.8cm}\displaystyle q_1(z)=\frac{\overline{a}_nq(z)-a_0q^*(z)}{z}$\\
$\indent\hspace{1cm}=(1-a)[1-(\frac{1}{2}(n-2a-an))]\left\{(1+\frac{n}{2})z^{n-1}+z^{n-2}+\cdots+z+1\right\}.$\\
By Lemma A, the number of zeros of $q_1(z)$ inside the unit circle is one less then the number of zeros of $q(z)$ inside the unit circle.
Let $p_1(z)=(1+\frac{n}{2})z^{n-1}+z^{n-2}+\cdots+z+1.$  Again $1<|1+\frac{n}{2}|,$ therefore\\

$\indent\hspace{-0.8cm}\displaystyle q_2(z)=\frac{\overline{a}_{n-1}p_1(z)-a_0p_1^*(z)}{z}$\\
$\indent\hspace{1cm}=\frac{n}{2}[(2+\frac{n}{2})z^{n-2}+z^{n-3}+\cdots+z+1].$\\
Again the number of zeros of $q_2(z)$ inside the unit circle is two less than the number of zeros of $q(z)$ inside the unit circle.
Continuing in this manner we derive that \\

$\indent\hspace{-1cm}\displaystyle q_k(z)=\left[(k-2)+\frac{n}{2}\right]\left\{(k+\frac{n}{2})z^{n-k}+z^{n-(k+1)}+\cdots+z^{n-(n-1)}+1\right\},\,k=2,3,\cdots,n-1.$\\
In particular for $k=n-1$\\

$\indent\displaystyle q_{n-1}(z)=\left((n-3)+\frac{n}{2}\right)\{((n-1)+\frac{n}{2})z+1\},$\\ which has $(n-1)$ less number of zeros inside the unit circle than the number of zeros of $q(z)$ inside the unit circle. But the zero of $q_{n-1}$ is $\displaystyle-\frac{2}{3n-2}$ which lies inside the unit circle $|z|=1$ for $n\geq 2$. Consequently all zeros of $q(z)$ lie inside $|z|=1$ and the proof of our theorem is now complete.\end{proof}
\begin{remark} If we set $a=0$, then $n$ is restricted to $n=1$ or $n=2$ and we get the same result as Theorem B stated in Section 1.\end{remark}
Next we give an example showing that for a given value of $n$, if we go beyond the range of real number $a$ as specified in Theorem 2.2,
then convolution no longer remains locally univalent and sense preserving. If we take $n=1,$ then the range of real constant $a$ comes out to be $[-\frac{1}{3},1).$
\begin{example} If we take $n=1$, $a=-0.34 <-\displaystyle\frac{1}{3}$ and $\theta=\pi,$ in (5) we get
 $$\indent\hspace{-1.7cm}\displaystyle\widetilde{\omega}_1(z)=-z\left[\frac{z^2+0.01z-1.01}{1+0.01z-1.01z^2}\right]$$
\indent\hspace{5cm} $=-zR(z).$\\
We prove that there exists some point $z_0$ in $E$ such that $\displaystyle\left|\widetilde{\omega}_1(z_0)\right|>1$.
Assume that this is not true. It is easy to see that for each $\alpha$, $|R(e^{i\alpha})|=1$ and $R(z)\overline{(R(\frac{1}{\overline z}))}=1$.
So the function $R(z)$ preserves the symmetry about the unit circle and maps the closed disk $|z|\leq1$ onto itself. Therefore, $R(z)$ can be written as a Blaschke product of order two. However, the product of the moduli of zeros of $R$ in the unit disk is $1.01$, which is a contradiction.\end{example} The image
of $E$ under $f_a\ast f_1$ for $a=-0.34$ is shown (using the applet \emph{Complex Tool} (see [\ref{do and ro}])) in Figure 1. Figure 2 ia a zoomed version of Figure 1 showing that the images of two outer most
 concentric circles in $E$ are intersecting and so $f_a\ast f_1$ is not univalent.\\
\begin{figure}
\centering
\includegraphics[width=0.60\textwidth]{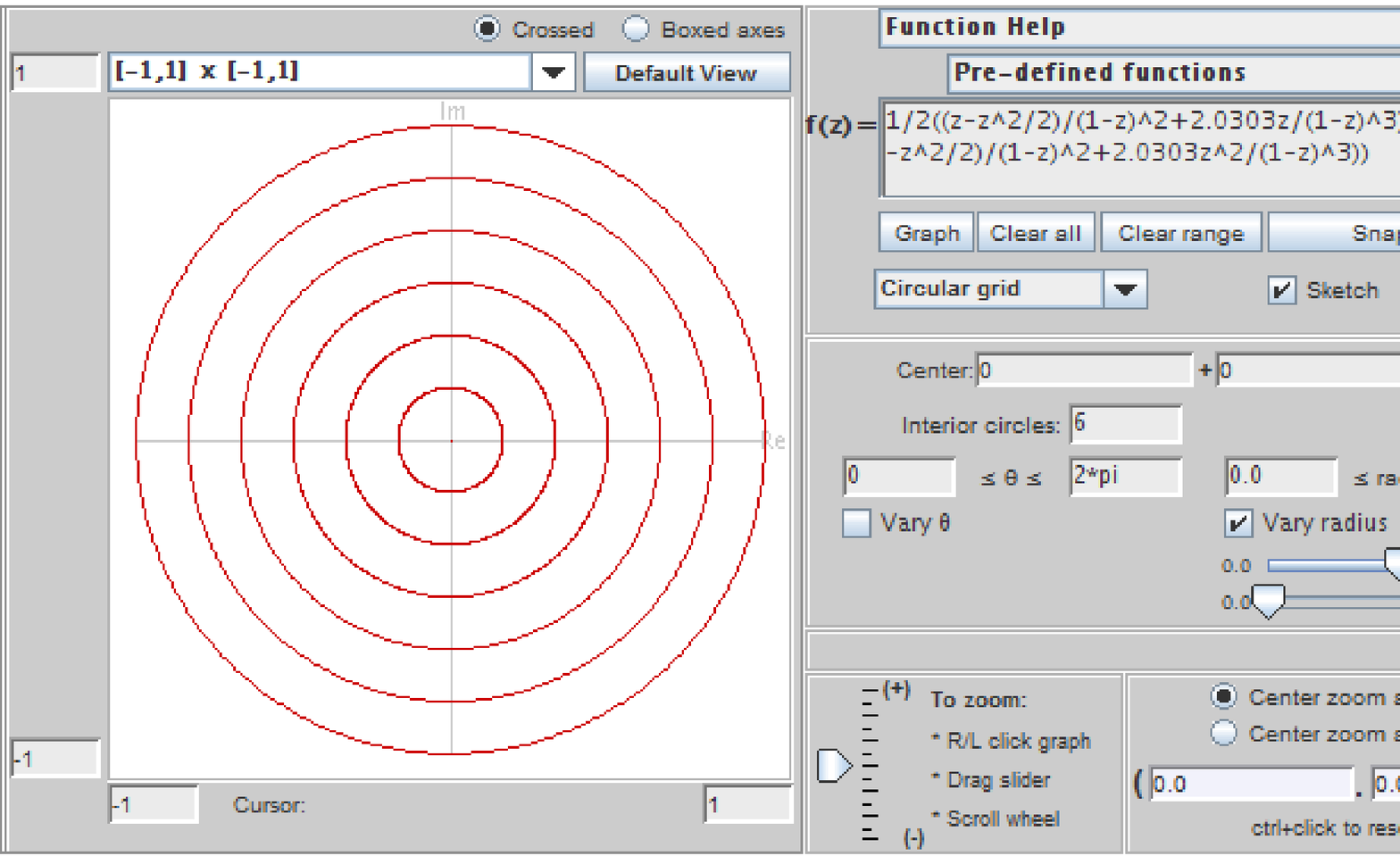}\\
\vspace{-0.2cm} \centering Figure 1
\end{figure}

\begin{figure}
\centering
\includegraphics[width=0.60\textwidth]{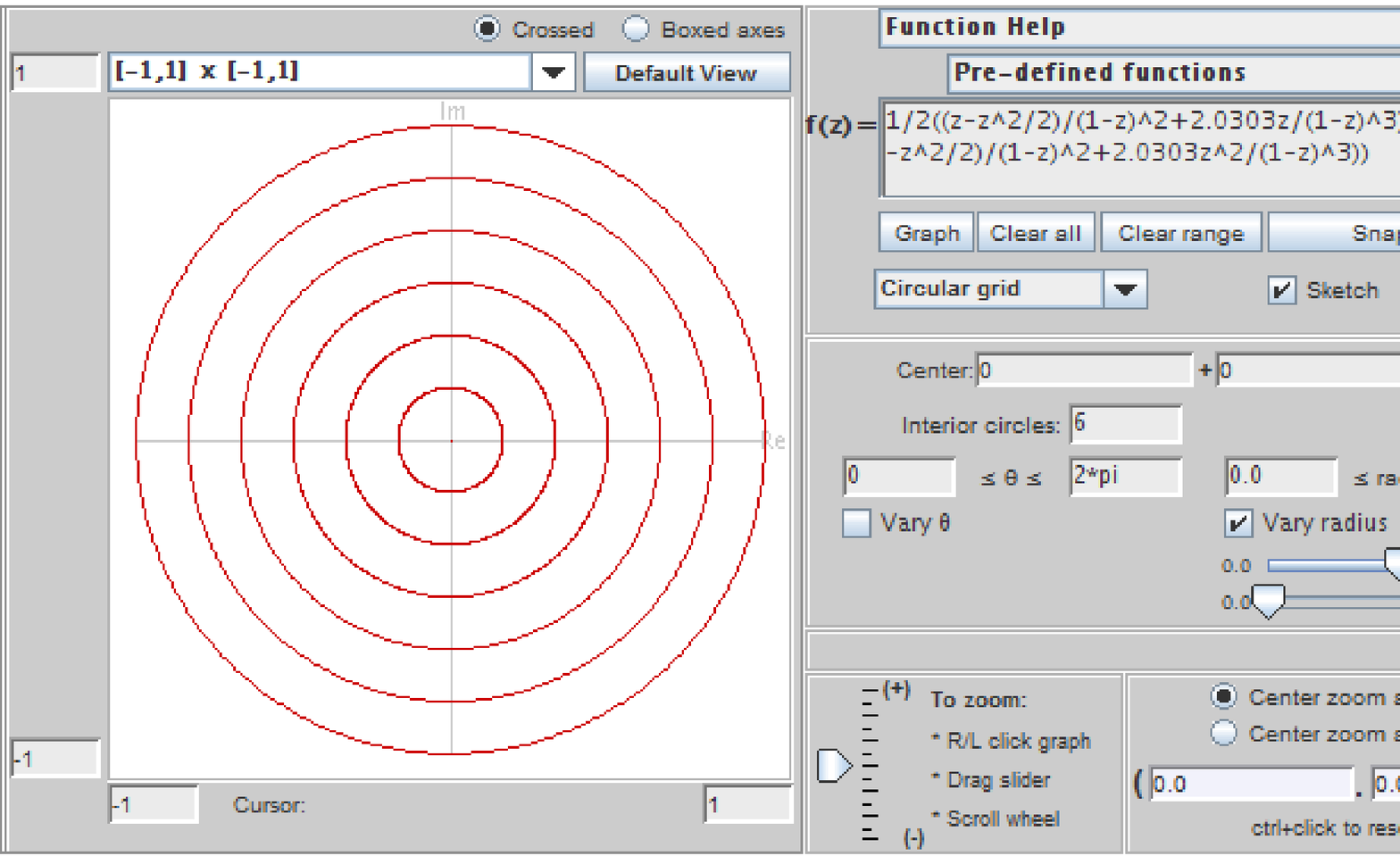}\\
\vspace{-0.2cm} \centering Figure 2
\end{figure}

 In the next result we find the condition under which the convolution, $f_a\ast f_b\in S_H$ and is CHD.

\begin{theorem}  If $f_b= h+\overline{g}\,\in K_H$ is given by $\displaystyle h+g=\frac{z}{1-z}$ with dilatation
$\displaystyle \omega(z)=\frac{b-z}{1-bz}\,\, (|b|<1, b\in \mathbb{R})$, then $f_a\ast f_b \, \in S_H$ and is CHD for $\displaystyle b\geq-\frac{1+3a}{3+a}$,
where  $f_a$ is as in Theorem 2.2.\end{theorem}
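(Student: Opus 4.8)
The plan is to mirror the proof of Theorem 2.2: reduce the whole statement to a single dilatation estimate and then invoke Theorem A. Since $f_a$ and $f_b$ both satisfy $h+g=\frac{z}{1-z}$, Theorem A tells us that $f_a\ast f_b$ will be CHD and lie in $S_H$ as soon as it is locally univalent and sense preserving, i.e. as soon as its dilatation $\widetilde{\omega}_1$ (furnished by Lemma 2.1) satisfies $|\widetilde{\omega}_1(z)|<1$ on $E$. So the entire task is to prove that inequality under the hypothesis $b\ge-\frac{1+3a}{3+a}$.

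First I would substitute $\omega(z)=\frac{b-z}{1-bz}$ into formula (4). The two identities that make the computation collapse are
$$1+\omega=\frac{(1+b)(1-z)}{1-bz}\qquad\text{and}\qquad \omega'=\frac{b^2-1}{(1-bz)^2}.$$
After clearing denominators, both the numerator and the denominator of (4) pick up the common factor $\frac{(1+b)(1-z)}{(1-bz)^2}$, which cancels (note $1+b\neq0$ since $|b|<1$). I expect to be left with the strikingly simple expression
$$\widetilde{\omega}_1(z)=\frac{2z^2+(1-3a-3b+ab)z+2ab}{2abz^2+(1-3a-3b+ab)z+2}=\frac{P(z)}{P^*(z)},$$
where $P(z)=2z^2+(1-3a-3b+ab)z+2ab$ has real coefficients and $P^*(z)=z^2\overline{P(1/\overline z)}$ is exactly its reverse.

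Exactly as in Theorem 2.2, I would then factor $P(z)=2(z-A_1)(z-A_2)$ and, using that the product of the roots $A_1A_2=ab$ is real, rewrite $\widetilde{\omega}_1=\frac{z-A_1}{1-\overline{A}_1 z}\cdot\frac{z-A_2}{1-\overline{A}_2 z}$. Each Blaschke factor maps $\overline E$ into $\overline E$ precisely when $|A_i|\le1$; moreover $|A_1A_2|=|ab|<1$ forces at least one root strictly inside, and a root on $|z|=1$ only contributes a unimodular constant. Hence $|\widetilde{\omega}_1(z)|<1$ on $E$ is equivalent to both zeros of the real quadratic $P$ lying in the closed unit disk.

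The heart of the matter is therefore this root-location step, which I would settle either by applying Schur--Cohn's algorithm (Lemma B) to the degree-two polynomial $P$, or more directly through the elementary criterion that a real quadratic $z^2+pz+q$ has both zeros in $\overline E$ iff $|q|\le1$ and $|p|\le 1+q$. Here $p=\tfrac12(1-3a-3b+ab)$ and $q=ab$: the bound $|q|<1$ is automatic from $|a|,|b|<1$; the inequality $p\ge-(1+q)$ reduces to $(1-a)(1-b)\ge0$, which holds since $a,b<1$; and the remaining inequality $p\le 1+q$ reduces to $1+3a+3b+ab\ge0$, i.e. $b\ge-\frac{1+3a}{3+a}$ (using $3+a>0$). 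This is precisely the asserted range, with the equality case $b=-\frac{1+3a}{3+a}$ corresponding to a single zero on the unit circle, which is harmless. Having established $|\widetilde{\omega}_1(z)|<1$ on $E$, Theorem A then delivers $f_a\ast f_b\in S_H$ and CHD. The only genuinely laborious point is the algebraic reduction of (4) to the $P/P^*$ form; once that is in hand, the root-location argument is short.
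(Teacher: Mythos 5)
Your proposal is correct and follows essentially the same route as the paper: substitute $\omega(z)=\frac{b-z}{1-bz}$ into (4), cancel the common factor $\frac{(1+b)(1-z)}{(1-bz)^2}$, and reduce via Theorem A to showing that both zeros of the real quadratic $m(z)=z^2+\frac{1}{2}(ab-3a-3b+1)z+ab$ lie in the closed unit disk (your $P$ is $2m$). The only cosmetic difference is the last step: the paper applies Cohn's rule (Lemma A) once, locating the zero $z_0=\frac{3}{2}\bigl(\frac{a+b}{1+ab}\bigr)-\frac{1}{2}$ of the linear polynomial $m_1$, whereas you use the equivalent elementary criterion $|q|\le 1$, $|p|\le 1+q$ for real quadratics, arriving at the same two inequalities $(1-a)(1-b)\ge 0$ and $1+3a+3b+ab\ge 0$, i.e.\ $b\ge-\frac{1+3a}{3+a}$.
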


\begin{proof} If $\widetilde{\omega}_1(z)$ is the dilatation of $f_a\ast f_b$, then in view of Theorem A it is sufficient to prove that $|\widetilde{\omega}_1(z)|<1$,
for all $z\in E$. Substituting  $\displaystyle \omega(z)=\frac{b-z}{1-bz}$ in (4), we get\\

 $\indent\hspace{-1cm}\widetilde{\omega}_1(z)=\displaystyle\left[\frac{2\left(\frac{b-z}{1-bz}\right)(a-z)\left(1+\frac{b-z}{1-bz}\right)+ \frac{b^2z-z}{(1-bz)^2} (a-1)(1-z)}{2(1-az)\left(1+\frac{b-z}{1-bz}\right)+ \frac{b^2z-z}{(1-bz)^2} (a-1)(1-z)}\right]$

$$ \begin{array}{clll}
\hspace{-6.5cm}=\displaystyle\frac{z^2+\frac{1}{2}(ab-3a-3b+1)z+ab}{abz^2+\frac{1}{2}(ab-3a-3b+1)z+1}=\frac{m(z)}{m^*(z)}
\end{array} $$
where $\displaystyle m(z)=z^2+\frac{1}{2}(ab-3a-3b+1)z+ab$\\
$\indent\hspace{1.4cm}=a_2z^2+a_1z+a_0\quad{\rm and}$\\
$\indent\hspace{.4cm}\displaystyle m^*(z)=z^2\overline{m\left(\frac{1}{\overline z}\right)}.$\\
As before, if $z_0$ is a zero of $m$ then $\displaystyle\frac{1}{\overline{z_0}}$ is a zero of $m^*$, and we can write $$\displaystyle\widetilde{\omega}_1(z)=\frac{(z+A)(z+B)}{(1+\overline {A} z)(1+\overline {B} z)}.$$
  It suffices to show that either both the zeros $-A$\,,$-B$ lie inside unit circle $|z|=1$ or one of the zeros lies inside $|z|=1$ and other lies on it .
As $|a_0|=|ab|<1=|a_2|$, using Lemma A on $m$, we have

$\indent\hspace{-0.8cm}\displaystyle m_1(z)=\frac{\overline{a}_2 m(z)-a_0m^*(z)}{z}$\\
$\indent\hspace{.9cm}=(1-ab)[(1+ab)z+\frac{1}{2}(-3a-3b+1+ab)]$.\\ But
  $z_0=\displaystyle\frac{3}{2}\left(\frac{a+b}{1+ab}\right)-\frac{1}{2}$ is the zero of $m_1$ which lies in or on the unit
  circle $|z|=1$ if ${ \displaystyle b\geq-\frac{(1+3a)}{(3+a)}.}$  So, either both zeros of $m$ lie inside $|z|=1$ or at least one zero lies in and other lies on $|z|=1$ . Hence $\displaystyle\left|\widetilde{\omega}_1(z)\right|<1.$ \end{proof}
\begin{corollary} The convolution $f_a\ast f_a\in S_H$  and CHD for $a\in[-3+2\sqrt{2},1)$.\end{corollary}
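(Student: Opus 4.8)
The plan is to obtain the corollary directly from Theorem 2.3 by specializing $b=a$. First I would observe that the mapping $f_a$ defined by (2) is itself a member of the family $f_b$ appearing in Theorem 2.3: both satisfy $\displaystyle h+g=\frac{z}{1-z}$ and carry the dilatation $\displaystyle\frac{a-z}{1-az}$, which is exactly $\displaystyle\frac{b-z}{1-bz}$ with $b=a$. Consequently $f_a\ast f_a$ is precisely the convolution $f_a\ast f_b$ with $b=a$, and Theorem 2.3 guarantees $f_a\ast f_a\in S_H$ and CHD as soon as the hypothesis $\displaystyle b\geq-\frac{1+3a}{3+a}$ holds upon substituting $b=a$, that is, as soon as
$$a\geq-\frac{1+3a}{3+a}.$$

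Next I would reduce this to a polynomial inequality. Since $|a|<1$ forces $3+a>0$, multiplying through by $3+a$ preserves the direction of the inequality and yields $a(3+a)\geq-(1+3a)$, i.e. $a^2+6a+1\geq0$. The roots of $a^2+6a+1$ are $-3\pm2\sqrt{2}$, and because the parabola opens upward the inequality holds exactly when $a\leq-3-2\sqrt{2}$ or $a\geq-3+2\sqrt{2}$. The first branch is incompatible with $|a|<1$ (as $-3-2\sqrt{2}<-1$), while $-3+2\sqrt{2}\approx-0.172$ lies in $(-1,1)$. Hence the admissible range is $a\in[-3+2\sqrt{2},1)$, which is exactly the assertion of the corollary; note that the closed left endpoint is consistent with the ``$\geq$'' in Theorem 2.3, where equality corresponds to a boundary zero of $m$ still giving $|\widetilde{\omega}_1(z)|<1$ in $E$.

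There is no genuine obstacle here beyond routine bookkeeping. The only two points that demand a little care are confirming that the denominator $3+a$ is positive, so that clearing it does not reverse the inequality, and selecting the root of the quadratic that actually falls inside the disk $|a|<1$. Everything else is an immediate consequence of Theorem 2.3, so the corollary requires nothing more than this substitution and the solution of a single quadratic inequality.
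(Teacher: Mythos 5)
Your proposal is correct and is precisely the intended derivation: the paper states the corollary as an immediate consequence of Theorem 2.3, obtained by setting $b=a$ so that the condition $b\geq-\frac{1+3a}{3+a}$ becomes $a^2+6a+1\geq0$, whose relevant root $-3+2\sqrt{2}$ gives the stated interval. Your care over the sign of $3+a$ and the choice of root is exactly the right bookkeeping, and nothing further is needed.
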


\noindent{\emph{Acknowledgement: The first author is thankful to Council of Scientific and Industrial Research, New Delhi, for financial support vide (grant no. 09/797/0006/2010 EMR-1).}}

{

\end{document}